\theoremstyle{definition}
\newtheorem{theorem}{Theorem}[section]
\newtheorem{lemma}[theorem]{Lemma}
\newtheorem{proposition}[theorem]{Proposition}
\newtheorem{question}[theorem]{Question}
\newtheorem{corollary}[theorem]{Corollary}
\newtheorem{definition}[theorem]{Definition}
\newtheorem{remark}[theorem]{Remark}
\def\F{{\mathbb F}}
\def\Fq{{\mathbb F}_q}
\def \mFq {\mathbb{F}_{q}}
\def \bzero {{\bf 0}}
\def \bfb {{\bf b}}
\def \calP {{\mathcal P}} 
\def \calM {{\mathcal C}}
\newcommand{\diag}{\operatorname{diag}}
\newcommand{\rank}{\operatorname{rank}}
\newcommand{\GL}{\operatorname{GL}}
\def\imod#1{\allowbreak\mkern10mu({\operator@font mod}\,\,#1)}
\title{Unimodular Polynomial Matrices over Finite Fields} 
\author{Akansha Arora}
\address{Indraprastha Institute of Information Technology Delhi (IIIT-Delhi), New Delhi 110020, India.}
\email{akanshaa@iiitd.ac.in}
\author{Samrith Ram}
\address{Indraprastha Institute of Information Technology Delhi (IIIT-Delhi), New Delhi 110020, India.}
\email{samrith@gmail.com}
\author{Ayineedi Venkateswarlu}
\address{Computer Science Unit, Indian Statistical Institute - Chennai Centre, Chennai 600029, India.}
\email{venku@isichennai.res.in}
\keywords{unimodular matrix polynomial, unimodular matrix, splitting subspace, controllable pair, irreducible polynomial, finite field}  
\subjclass[2010]{93B05, 93B07, 15B33, 15A22, 15A83}
\begin{document}
\begin{abstract}
We consider some combinatorial problems on matrix polynomials over finite fields. Using results from control theory we give a proof of a result of Lieb, Jordan and Helmke on the number of linear unimodular matrix polynomials over a finite field. As an application of our results we give a new proof of a theorem of Chen and Tseng which answers a question of Niederreiter on splitting subspaces. We use our results to affirmatively resolve a conjecture on the probability that a matrix polynomial is unimodular.   
\end{abstract}
\maketitle
\tableofcontents 
\section{Introduction}  
Denote by $\Fq$ the finite field with $q$ elements where $q$ is a prime power. Let $\Fq[x]$ denote the ring of polynomials over $\Fq$ in the indeterminate $x$. For any ring $R$ and positive integers $n,k$ define $M_{n,k}(R)$ to be the set of all $n\times k$ matrices over $R$. Similarly $M_k(R)$ denotes the ring of $k\times k$ matrices over $R$. Denote by $I_{n,k}$ the matrix in $M_{n,k}(\Fq)$ whose $(i,j)$\textsuperscript{th} entry is zero whenever $i\neq j$ and equal to 1 for $i=j$.

The main objects of study in this paper are matrix polynomials over finite fields. A matrix polynomial over a field $F$ in the variable $x$ is a sum $\sum_{i=0}^{d}A_ix^i$, where $A_i\in M_{n,k}(F)(0\leq i\leq d)$ for some fixed positive integers $n,k$. It is often convenient to view such a matrix polynomial as a single matrix whose entries are polynomials in $x$ (sometimes referred to as a polynomial matrix) and we freely alternate between these two points of view. A matrix polynomial ${\bf A}=\sum_{i=0}^{d}A_ix^i\in M_{n,k}(\Fq[x])$ is \emph{unimodular} if the greatest common divisor of all $r\times r$ minors of ${\bf A}$ is equal to 1 where $r=\min\{n,k\}$. The notion of unimodularity can be defined more generally for rectangular matrices over an arbitrary integral domain. A landmark result in the setting of unimodularity is the Quillen-Suslin theorem \cite{MR0427303,MR0469905} formerly known as Serre's conjecture. We refer to \cite{MR3008525,JL2018,MR2763589,MR3706911}  for other contexts where unimodularity is considered. We begin with a combinatorial question concerning matrix polynomials over a finite field.  
\begin{question}\label{q:first}   
  Given positive integers $n,k$ and a prime power $q$, determine the number of matrices $A\in M_{n,k}(\Fq)$ for which the matrix polynomial $xI_{n,k}-A$ is unimodular.
\end{question}    
    
This question was essentially considered by Koci\k{e}cki and Przyłuski \cite{MR1019984} (also see \cite[Prob. 1.2]{zerokernel}) in an attempt to determine the number of reachable pairs of matrices over a finite field. Reachability is a fundamental notion in the control theory of linear systems. The question was fully answered only recently by Lieb, Jordan and Helmke \cite[Thm.~1]{HJL2016} who showed that the answer is equal to $\prod_{i=1}^{k}(q^n-q^i)$. 
We refer to the introduction of \cite{zerokernel} for details and alternate formulations of the result of Lieb et al. Our main result is Lemma \ref{eT} which allows us to give a new proof (Corollary \ref{cor:numsimp}) of the theorem of Lieb et al. An essential ingredient in our main lemma is a control theoretic result of Brunovský on completely controllable pairs. 

Further applications of our results appear in Sections \ref{sec:split} and \ref{sec:unimodular}. In Section \ref{sec:split} we consider splitting subspaces (defined below) which were introduced by Niederreiter \cite[Def. 1]{N2} in the context of his work on the multiple recursive matrix method for pseudorandom number generation. 
\begin{definition}\label{def:splittingsubspace}
  Let $d,m$ be positive integers and consider the vector space $\F_{q^{md}}$ over $\Fq$. 
  For any element $\alpha\in \F_{q^{md}}$ an $m$-dimensional subspace $W$ of $\F_{q^{md}}$ is $\alpha$-\emph{splitting} if
  \begin{align*}
    \F_{q^{md}}=W\oplus \alpha W\oplus \cdots \oplus \alpha^{d-1}W.
  \end{align*}
\end{definition}
 Niederreiter was interested in the following question on splitting subspaces.
\begin{question}
  Given $\alpha\in \F_{q^{md}}$ such that $\F_{q^{md}}=\Fq(\alpha)$, what is the number of $\alpha$-splitting subspaces of $\F_{q^{md}}$ of dimension $m$?
\end{question} 
It may be noted that the same question was also considered by Goresky and Klapper (see the remark in ~\cite[p. 1653]{GoreskyKlapper2006} and~\cite[Thm. 3(4)]{GoreskyKlapper2006}). In addition to the evident cryptographic aspect, Niederreiter's question also has interesting connections with group theory and finite projective geometry via block companion Singer cycles. We refer to \cite{GSM,m=2}  for more on this topic. The case $m=2$ of Niederreiter's question was settled in \cite{m=2} using a result that answers the following question: What is the probability that two randomly chosen polynomials of a fixed positive degree over a finite field are coprime? This question on the probability of coprime polynomials goes back to an exercise in Knuth \cite[\S 4.6.1, Ex.~5]{K} and has subsequently been considered by Corteel, Savage, Wilf and Zeilberger \cite{MR1620873} in the more general setting of combinatorial prefabs. Further results on the degree distribution of the greatest common divisor of random polynomials over a finite field appear in \cite{MR2238027}. In fact, our main result relies on Lemma~\ref{rprime} which may be viewed as a probabilistic result on coprime polynomials. Chen and Tseng \cite[Cor.~3.4]{sscffa} eventually answered Niederreiter's question on splitting subspaces by proving the following theorem which was initially conjectured in \cite[Conj. 5.5]{m=2}.   
\begin{theorem}[Splitting Subspace Theorem]
\label{th:numsplitting}
  For any $\alpha\in \F_{q^{md}}$ such that $\F_{q^{md}}=\Fq(\alpha)$, the number of $\alpha$-splitting subspaces of $\F_{q^{md}}$ of dimension $m$ is precisely
  $$
\frac{q^{md}-1}{q^m-1}q^{m(m-1)(d-1)}.
  $$
\end{theorem}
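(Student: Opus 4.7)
The plan is to translate the splitting-subspace count into a count of polynomial matrices of size $m \times m$ with prescribed irreducible determinant, and then invoke Lemma~\ref{rprime}. Fix an $\F_q$-basis of $V = \F_{q^{md}}$ and let $A \in M_{md}(\F_q)$ denote the matrix of multiplication by $\alpha$; since $\F_q(\alpha) = V$, the minimal polynomial of $A$ is an irreducible polynomial $f(x)$ of degree $md$. An ordered basis $(w_1, \ldots, w_m)$ of an $m$-dimensional $\F_q$-subspace $W$ corresponds to a matrix $B = [w_1 \mid \cdots \mid w_m] \in M_{md, m}(\F_q)$, and $W$ is $\alpha$-splitting if and only if the block Krylov matrix
\[
K_d(A, B) := [B \mid AB \mid \cdots \mid A^{d-1} B]
\]
is invertible. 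Since each splitting $W$ admits exactly $|\GL_m(\F_q)|$ ordered bases, the number of $\alpha$-splitting $m$-dimensional subspaces equals $|\{B : K_d(A, B) \in \GL_{md}(\F_q)\}|/|\GL_m(\F_q)|$.

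To count the $B$'s, note that $K_d(A, B)$, read as a change of basis, realizes $A$ as conjugate to the block companion matrix $C(g)$ of a unique monic matrix polynomial $g(y) = y^d I_m + g_{d-1}y^{d-1} + \cdots + g_0 \in M_m(\F_q[y])$, and the identity $\det(yI - C(g)) = \det g(y)$ forces $\det g = f$. Conversely, every such $g$ arises this way, and for each fixed $g$ the set of conjugators $S \in \GL_{md}(\F_q)$ with $S^{-1}AS = C(g)$ is a coset of $\mathrm{Cent}_{\GL_{md}(\F_q)}(A) \cong \F_{q^{md}}^{\times}$, which has $q^{md} - 1$ elements. Therefore $|\{B : K_d(A, B) \in \GL_{md}(\F_q)\}| = (q^{md} - 1)\,N$, where $N$ denotes the number of monic degree-$d$ polynomial matrices $g \in M_m(\F_q[y])$ with $\det g = f$.

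The remaining task is to show that $N = q^{m(m-1)(d-1)} \prod_{i=1}^{m-1}(q^m - q^i)$. Reducing modulo $f$, one identifies $N$ with the number of matrices in the $\F_q$-affine subspace $\alpha^d I_m + M_m(L) \subseteq M_m(\F_{q^{md}})$ that are singular, where $L$ is the $\F_q$-span of $1, \alpha, \ldots, \alpha^{d-1}$; by the Smith normal form, any such matrix has $\F_{q^{md}}$-corank exactly one. Fibering the count over kernel directions $v \in \F_{q^{md}}^m \setminus \{0\}$ and organizing by the $\F_q$-linear structure of the components of $v$ reduces the problem to a coprimality count on $\F_q[x]$ to which Lemma~\ref{rprime} applies, and produces the stated formula for $N$. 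Assembly then yields
\[
\#\{\alpha\text{-splitting subspaces of dimension } m\} = \frac{(q^{md} - 1)\, q^{m(m-1)(d-1)} \prod_{i=1}^{m-1}(q^m - q^i)}{\prod_{i=0}^{m-1}(q^m - q^i)} = \frac{q^{md} - 1}{q^m - 1}\, q^{m(m-1)(d-1)}.
\]

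The main obstacle is the computation of $N$: direct expansion of $\det g$ via the Leibniz formula or a naive Smith-normal-form attack yields no manageable expression, and the subtle interaction between the $\F_q$-subspace $L$ and the multiplicative structure of $\F_{q^{md}}$ makes the fiberwise analysis over kernel directions delicate, which is exactly what the probabilistic coprimality result Lemma~\ref{rprime} is designed to handle cleanly.
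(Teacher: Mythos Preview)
Your reduction in the first two paragraphs is correct and is exactly the classical equivalence the paper invokes in Remark~\ref{ssc=bcmi}: counting $\alpha$-splitting subspaces is the same as counting $(m,d)$-block companion matrices with characteristic polynomial $f$, i.e.\ computing your $N$. So up to this point you are retracing the paper's route.

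The gap is in your computation of $N$. You correctly observe that $\det g=f$ is equivalent to $g(\alpha)$ being singular of corank one, but the sentence ``fibering the count over kernel directions \ldots\ reduces the problem to a coprimality count to which Lemma~\ref{rprime} applies'' is not a proof. For a fixed nonzero $v\in\F_{q^{md}}^m$, the number of $H\in M_m(L)$ with $(\alpha^d I_m+H)v=0$ is governed by the $\F_q$-linear map $L^m\to\F_{q^{md}}$, $(\ell_1,\dots,\ell_m)\mapsto\sum_j \ell_j v_j$, whose rank depends on $v$ in a way that is essentially equivalent to knowing whether the $\F_q$-span of $v_1,\dots,v_m$ is $\alpha$-splitting; so the fibration is circular, not a reduction to Lemma~\ref{rprime}. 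No coprime-tuple structure is visible in these fibres, and Lemma~\ref{rprime} concerns tuples of univariate polynomials with prescribed leading coefficients, which is not what the equation $Hv=-\alpha^d v$ produces.

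The paper computes $N$ by a different mechanism: it builds the last $m$ columns one at a time, uses Wimmer's completion theorem (Theorem~\ref{th:wimmer}) to see that unimodularity of each partial matrix $xI_{\ell,\ell-m+i}-C_i$ is exactly what permits completion to characteristic polynomial $f$, and then counts those unimodular extensions via Lemma~\ref{lem:extension}. Lemma~\ref{rprime} enters only indirectly, inside the proof of Lemma~\ref{eT}, after passage to Brunovsk\'y normal form turns the one-column extension problem into a genuine coprimality count. That structural step (Brunovsk\'y form) is what your sketch is missing.
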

  
In this paper a control-theoretic result of Wimmer (Theorem \ref{th:wimmer}) is used to prove Theorem~\ref{th:fibersize} from which the Splitting Subspace Theorem follows as a corollary. 
In Section \ref{sec:unimodular} a generalization of Question \ref{q:first} is considered. The answer to this question which was stated earlier can be given a probabilistic flavour as follows.
\begin{theorem}\label{th:linearunimodular}  
If a matrix $A$ is selected uniformly at random from $M_{n,k}(\Fq)$, then the probability that $xI_{n,k}-A$ is unimodular is given by
$\prod_{i=1}^k (1-q^{i-n})$.  
\end{theorem}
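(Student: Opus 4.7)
The plan is to deduce Theorem \ref{th:linearunimodular} directly from the counting result established earlier in the paper as Corollary \ref{cor:numsimp} (the theorem of Lieb, Jordan and Helmke). Since the stated probability is just the fraction of matrices in $M_{n,k}(\Fq)$ for which $xI_{n,k} - A$ is unimodular, the entire argument reduces to writing down that count and dividing by $|M_{n,k}(\Fq)| = q^{nk}$.

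Concretely, Corollary \ref{cor:numsimp} asserts that the number of $A \in M_{n,k}(\Fq)$ for which $xI_{n,k} - A$ is unimodular equals $\prod_{i=1}^{k}(q^n - q^i)$. Dividing this by $q^{nk}$ and distributing one factor of $q^n$ into each term of the product yields
$$
\frac{\prod_{i=1}^{k}(q^n - q^i)}{q^{nk}} \;=\; \prod_{i=1}^{k}\frac{q^n - q^i}{q^n} \;=\; \prod_{i=1}^{k}\bigl(1 - q^{i-n}\bigr),
$$
which is precisely the expression claimed. The formula only defines a genuine probability when each factor is non-negative, so the statement implicitly assumes $n \geq k$; under this convention, each $1 - q^{i-n}$ lies in $[0,1]$ as required.

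All genuine mathematical content has been absorbed into the earlier machinery, namely Lemma \ref{eT} combined with Brunovský's classification of completely controllable pairs, from which the exact count in Corollary \ref{cor:numsimp} is extracted. In other words, the anticipated main obstacle is the proof of that exact count; once it is in hand, Theorem \ref{th:linearunimodular} is a one-line probabilistic restatement requiring no further argument beyond elementary algebraic simplification.
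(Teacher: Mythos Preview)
Your proposal is correct and matches the paper's own treatment: Theorem~\ref{th:linearunimodular} is presented in the introduction merely as a probabilistic restatement of the count in Corollary~\ref{cor:numsimp}, with no separate proof given, and your derivation is exactly the division by $q^{nk}$ that this restatement entails.
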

Using results in Section \ref{sec:simple}, we prove a conjecture (Theorem~\ref{th:density}) proposed in \cite{zerokernel} on the proportion of unimodular polynomial matrices which generalizes Theorem~\ref{th:linearunimodular}.


\section{Simple Linear Transformations}\label{sec:simple}
We begin by recalling the notion of a simple linear transformation \cite[Def.~3.1]{zerokernel}.  

\begin{definition}
\label{def:simple}
   Let $V$ denote a vector space over a field $F$ and let $W$ be a subspace of $V$. An $F$-linear transformation $T:W\to V$ is \emph{simple} if the only $T$-invariant subspace properly contained in $V$ is the zero subspace.
 \end{definition}

 \begin{remark}
Note that the definition requires that there are no $T$-invariant subspaces properly contained in $V$ rather than in $W$. The reason being that if $W$ is a proper subspace, then the definition does not allow $W$ itself to be $T$-invariant. In the case $W=V$ we necessarily have that $W$ is $T$-invariant. It can be shown that a linear operator $T$ on a finite dimensional vector space $V$ is simple if and only if it has an irreducible characteristic polynomial. In fact simple maps defined on a proper subspace $W$ of a vector space $V$ are precisely the restrictions to $W$ of simple maps defined on all of $V$.
   
 \end{remark}

 The following proposition 
 elucidates the connection between simple linear transformations and unimodularity.
 \begin{proposition}\label{pr:simplematrix}
   Let $V$ be an $n$-dimensional vector space over $F$ with ordered basis $\mathcal{B}_n=\{v_1,\ldots,v_n\}$. Let $\mathcal{B}_k=\{v_1,\ldots,v_k\}$ denote the ordered basis for the subspace $W$ spanned by $v_1,\ldots,v_k$. Let $T:W\to V$ be a linear transformation and let $Y\in M_{n,k}(F)$ denote the matrix of $T$ with respect to $\mathcal{B}_k$ and $\mathcal{B}_n$. Then $T$ is simple if and only if $xI_{n,k}-Y$ is unimodular.
\end{proposition}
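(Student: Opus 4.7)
The plan is to translate unimodularity of $xI_{n,k}-Y$ into an eigenvalue condition for $T$ over the algebraic closure $\overline F$, and then exchange eigenvectors for invariant subspaces via Galois descent. Concretely, I would first record the standard equivalence that a matrix polynomial $M(x)\in M_{n,k}(F[x])$ (with $n\ge k$) is unimodular iff its $k$-th determinantal ideal equals all of $F[x]$, which, upon reducing modulo an arbitrary maximal ideal $(p)\subseteq F[x]$ and passing to the residue field, is equivalent to $M(\lambda)\in M_{n,k}(\overline F)$ having full column rank $k$ for every $\lambda\in\overline F$. Applied to $M(x)=xI_{n,k}-Y$, failure of unimodularity yields $\lambda\in\overline F$ and a nonzero $w\in\overline F^{\,k}$ with $Yw=\lambda I_{n,k}w$. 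Since $Y$ is the matrix of $T$ in the bases $\mathcal B_k,\mathcal B_n$ and $I_{n,k}$ is the matrix of the inclusion $W\hookrightarrow V$, the vector $\tilde w:=\sum_{i=1}^k w_i v_i\in W\otimes_F\overline F$ satisfies $T\tilde w=\lambda\tilde w$ in $V\otimes_F\overline F$ (after extending $T$ by base change).

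For the reverse implication (unimodular $\Rightarrow$ simple), I would argue by contrapositive. If $T$ is not simple, there is a nonzero $T$-invariant subspace $U\subseteq W$ with $U\subsetneq V$; then $T|_U\colon U\to U$ is an $F$-linear endomorphism of a nonzero finite-dimensional space, hence has some eigenvalue $\lambda\in\overline F$ with nonzero eigenvector $\tilde w\in U\otimes\overline F\subseteq W\otimes\overline F$. The translation above then produces a nonzero element of $\ker(\lambda I_{n,k}-Y)$, contradicting unimodularity.

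For the forward implication (simple $\Rightarrow$ unimodular), again by contrapositive, start with $\lambda\in\overline F$ and $0\neq\tilde w\in W\otimes\overline F$ satisfying $T\tilde w=\lambda\tilde w$. Let $p(x)=x^d+\sum_{i=0}^{d-1}a_ix^i\in F[x]$ be the minimal polynomial of $\lambda$; after replacing $\overline F$ by $F(\lambda)$ (allowed since $\ker(\lambda I_{n,k}-Y)$ is defined over $F(\lambda)$), write $\tilde w=\sum_{j=0}^{d-1}\lambda^j u_j$ uniquely with $u_j\in W$. Expanding $T\tilde w=\lambda\tilde w$, using $\lambda^d=-\sum_{i=0}^{d-1}a_i\lambda^i$ to eliminate $\lambda^d u_{d-1}$, and equating coefficients in the $F$-basis $1,\lambda,\ldots,\lambda^{d-1}$ of $F(\lambda)$ yields the $F$-linear identities $T(u_0)=-a_0 u_{d-1}$ and $T(u_j)=u_{j-1}-a_j u_{d-1}$ for $1\le j\le d-1$. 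Hence $U:=\operatorname{span}_F(u_0,\ldots,u_{d-1})\subseteq W$ is a nonzero $T$-invariant $F$-subspace, and since the case of genuine interest is $W\subsetneq V$ (i.e.\ $k<n$; the square case $k=n$ is degenerate as $\det(xI_n-Y)$ is never a unit), $U$ is properly contained in $V$, contradicting simplicity.

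The main obstacle is precisely the Galois-descent step in the previous paragraph: starting from an eigenvector defined over $\overline F$, one must manufacture an $F$-rational $T$-invariant subspace sitting inside the proper subspace $W$ in order to contradict simplicity. The explicit decomposition in powers of $\lambda$ together with reduction modulo $p(x)$ accomplishes this descent, and simultaneously ensures that all of the vectors $u_j$ lie in $W$ so that applying $T$ to them is legal.
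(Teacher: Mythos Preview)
Your argument is correct. The paper does not actually supply a proof here but only cites \cite[Prop.~2.5, Prop.~3.2]{zerokernel}, so there is little to compare against directly; what you have written is a self-contained replacement. The two main ingredients you use are both sound: (i) the standard fact that an $n\times k$ polynomial matrix over $F[x]$ (with $n\ge k$) is unimodular iff it has full column rank at every $\lambda\in\overline F$, which follows since the $k$-th determinantal ideal is invariant under field extension; and (ii) the explicit descent, writing an eigenvector $\tilde w\in W\otimes_F F(\lambda)$ as $\sum_j \lambda^j u_j$ and reading off from $T\tilde w=\lambda\tilde w$ that $U=\operatorname{span}_F(u_0,\dots,u_{d-1})$ is a nonzero $T$-invariant $F$-subspace of $W$. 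Your move from $\overline F$ down to $F(\lambda)$ before decomposing is the right one, since $\ker(\lambda I_{n,k}-Y)$ is already defined over $F(\lambda)$.

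One remark on the boundary case: your parenthetical that $k=n$ is ``degenerate'' is accurate but understates the issue. For $k=n\ge 1$ the polynomial $\det(xI_n-Y)$ is monic of positive degree, so $xI_{n,n}-Y$ is never unimodular, whereas simple operators on $V$ (those with irreducible characteristic polynomial) do exist. Thus the biconditional as literally stated fails when $k=n$. This is a defect of the stated proposition rather than of your proof; the paper only ever applies the result with $k<n$, and your argument is complete in that range.
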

\begin{proof}
  See \cite[Prop. 2.5]{zerokernel} and \cite[Prop. 3.2]{zerokernel}.
\end{proof}

Let $m$ be a positive integer and let ${\bf a} = (a_1,\ldots,a_m)\in \mFq^m$ be an arbitrary but fixed nonzero vector. Let $t$ be the largest index such that $a_t\neq 0$.  Let $d_1\geq d_2\geq \cdots \geq  d_m$
be a nonincreasing sequence of integers with $d_t \geq -1$.
Let $N_{\bf a}(d_1,\ldots d_m)$ denote the number of $m$-tuples $(f_1,\ldots,f_m)$ of polynomials over $\Fq$
such that $f_i = a_i x^{d_i+1} + h_i$ and $\deg h_i\leq d_i$ for $1\leq i\leq m$ with $\gcd(f_1,\ldots,f_m)=1$. Here we interpret negative powers of $x$ to be zero. Since $a_t\neq 0$, we necessarily have $\deg f_t = d_t+1$ for any tuple $(f_1,\ldots,f_m)\in N_{\bf a}(d_1,\ldots,d_m)$. We adopt the convention that the degree of the zero polynomial is $-\infty$. 
Note that if there is some $s \geq t$ such that $d_i<0$ for each $s < i\leq m$, then  $N_{\bf a}(d_1,\ldots,d_m)=N_{\bf a'}(d_1,\ldots,d_s)$ where ${\bf a'}= (a_1,\ldots,a_s)$. 


We adapt an argument in the proof of ~\cite[Thm. 4.1]{Mario} to prove the following lemma which is central to our main result.
\begin{lemma}\label{rprime}  
Let $m$ be a positive integer and let $d_1\geq d_2\geq \cdots\geq d_m\geq 0$ be a sequence of integers. Let ${\bf a} = (a_1,\ldots,a_m)\in \mFq^m$ be a fixed nonzero vector. We have
\[N_{\bf a}(d_1,\ldots,d_m)=q^{k+m}-q^{k+1},\]
where $k=d_1+\cdots+d_m$. 
\end{lemma}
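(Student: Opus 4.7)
The plan is Möbius inversion over monic polynomials in $\Fq[x]$. For each monic $D\in\Fq[x]$, let $M(D)$ denote the number of tuples $(f_1,\dots,f_m)$ satisfying the prescribed degree and leading-coefficient constraints such that $D \mid f_i$ for every $i$; then the standard inversion formula
\[
N_{\bf a}(d_1,\dots,d_m) \;=\; \sum_{D \text{ monic}} \mu(D)\, M(D)
\]
reduces the lemma to a clean computation of $M(D)$.

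The technical heart of the argument is to show that $M(D)$ depends on $D$ only through $e := \deg D$. I would do this index-by-index. If $a_i \ne 0$, then $f_i$ has degree exactly $d_i+1$ with fixed leading coefficient, and the number of such $f_i$ divisible by $D$ is $q^{d_i+1-e}$ when $e \le d_i+1$ and $0$ otherwise. If $a_i = 0$, then $f_i$ ranges over polynomials of degree at most $d_i$, and the count is $q^{d_i+1-e}$ when $e \le d_i$ and simply $1$ (namely $f_i=0$) when $e > d_i$. The product of these factors over $i$ equals $q^{S(e)}$ with $S(e) := \sum_i \max(d_i+1-e,\,0)$, provided none of the factors vanishes; the vanishing condition is exactly $e > d_t+1$, since $a_t \ne 0$ and $d_t = \min\{d_i : a_i \ne 0\}$ (the $d_i$'s being non-increasing). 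Hence $M(D) = q^{S(e)}$ for $e \le d_t+1$ and $M(D) = 0$ otherwise, independent of $D$ beyond its degree.

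The concluding step invokes the classical identity
\[
\sum_{D \text{ monic}} \mu(D)\, z^{\deg D} = 1 - qz,
\]
the reciprocal of the zeta function of $\Fq[x]$. Equivalently, $\sum_{\deg D = e} \mu(D)$ equals $1, -q, 0$ for $e=0, e=1, e \ge 2$ respectively, so only the $e=0$ and $e=1$ terms survive in the Möbius sum. Using $S(0) = \sum_i (d_i+1) = k+m$ and $S(1) = \sum_i d_i = k$ (the latter crucially using $d_i \ge 0$), one reads off
\[
N_{\bf a}(d_1,\dots,d_m) = q^{k+m} - q\cdot q^k = q^{k+m} - q^{k+1},
\]
as required.

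The main obstacle I expect is the careful bookkeeping needed to see that $M(D)$ is a function of $\deg D$ alone: one must verify that the ``trivially divisible'' case $f_i = 0$ (when $a_i = 0$ and $e > d_i$) combines uniformly with the other regimes to produce a clean product of the form $q^{\max(d_i+1-e,0)}$, and that the only obstruction to non-zero $M(D)$ is the cutoff at $e = d_t + 1$ coming from the pivot index $t$. Once that case analysis is in place, the conclusion is immediate from the zeta-function identity, and the hypothesis $d_m \ge 0$ is used only at the very end to ensure $S(1) = k$.
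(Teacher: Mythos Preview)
Your argument is correct. The Möbius inversion over monic $D\in\Fq[x]$, together with the case analysis showing $M(D)=q^{S(e)}$ depends only on $e=\deg D$ (with cutoff at $e=d_t+1$), goes through exactly as you describe, and the zeta-function identity $\sum_{D}\mu(D)z^{\deg D}=1-qz$ kills all terms with $e\ge 2$.

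This is a genuinely different route from the paper's proof. The paper partitions the full set $S(d_1,\dots,d_m)$ of admissible tuples according to the \emph{degree} $d$ of their gcd, observes that the stratum $S_d$ has size $q^d\,N_{\bf a}(d_1-d,\dots,d_m-d)$, and thereby obtains a convolution identity $q^{k+m}=\sum_{d=0}^{d_t+1}q^d N_{\bf a}(d_1-d,\dots,d_m-d)$; shifting each $d_i$ up by one and subtracting $q$ times the original identity isolates $N_{\bf a}(d_1+1,\dots,d_m+1)$. In effect the paper proves the formula by setting up and solving a first-order recursion, while you invert directly using the known Möbius structure of $\Fq[x]$. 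Your approach is a little more conceptual and avoids the shift-and-subtract manipulation; the paper's approach is more self-contained, since it never invokes the Möbius function or the zeta identity. The two are of course dual to one another: the paper's stratification is the Dirichlet convolution $|S|=\sum_D N_D$, and your inversion is precisely its inverse.
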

\begin{proof}
  Fix a positive integer $m$. 
Let $S(d_1,\ldots,d_m)$ denote the set of ordered $m$-tuples $(f_1,\ldots,f_m)$ where $f_i = a_i x^{d_i+1} + h_i$ for some $h_i$ with
$\deg h_i\leq d_i$ for $1\leq i\leq m$. Let $t$ be the largest index such that $a_t\neq 0$. We partition $S(d_1,\ldots,d_m)$ into disjoint subsets $S_0,S_1,\ldots,S_{d_t+1}$
where the set $S_d \;(0\leq d\leq d_t+1)$ denotes the set of $m$-tuples in $S(d_1,\ldots,d_m)$ whose GCD is a monic polynomial
of degree $d$. For each monic polynomial $h$ over $\Fq$ of degree $d$ and any coprime $m$-tuple $(g_1,\ldots,g_m)$ of polynomials in $S(d_1-d,\ldots,d_m-d)$, it is easy to see that $(g_1h,g_2h,\ldots,g_mh)\in S_d$. Conversely, for any tuple $(f_1,\ldots,f_m)\in S_d$, the polynomial $h=\gcd(f_1,\ldots,f_m)$ is monic of degree $d$ and $(f_1/h,\ldots,f_m/h)$ is an ordered $m$-tuple of coprime polynomials in $S(d_1-d,\ldots,d_m-d)$. As a result, we have $|S_d|=q^d N_{\bf a}(d_1-d,\ldots,d_m-d)$ for $0\leq d\leq d_t+1$.
For $k=d_1+\cdots+d_m$, we have
  \begin{equation}
    \label{eq:first}
q^{k+m}=\sum_{d=0}^{d_t+1}|S_d|=\sum_{d=0}^{d_t+1}q^dN_{\bf a}(d_1-d,\ldots,d_m-d).    
  \end{equation}

Replacing $d_i$ by $d_{i}+1$ for each $1\leq i\leq m$, we obtain
\begin{align*}
  q^{k+2m}&=\sum_{d=0}^{d_t+2}q^dN_{\bf a}(d_1+1-d,\ldots,d_m+1-d)  \\
          &=\sum_{d=-1}^{d_t+1}q^{d+1}N_{\bf a}(d_1-d,\ldots,d_m-d) \\
          &=N_{\bf a}(d_1+1,\ldots,d_m+1)+q\sum_{d=0}^{d_t+1}q^dN_{\bf a}(d_1-d,\ldots,d_m-d)\\
  &=N_{\bf a}(d_1+1,\ldots,d_m+1)+q(q^{k+m}),
\end{align*}
where the last equality follows from \eqref{eq:first}. It follows that $N_{\bf a}(d_1+1,\ldots,d_m+1)=q^{k+2m}(1-q^{1-m})$, or equivalently, $N_{\bf a}(d_1,\ldots,d_m)=q^{k+m}-q^{k+1}$ as desired.
\end{proof}

As the language of control theory is used in the proof of our main result we collate here a few definitions \cite[IX.2]{Partially} and results that are referred to later on.
  In what follows, $F$ denotes an arbitrary field and $k,\ell$ are fixed positive integers.
\begin{definition}
A matrix pair $(A,B)\in M_{k,k}(F)\times M_{k,\ell}(F)$ is a \emph{reachable pair}  if the $k\times k\ell$ matrix $S(A,B):=   \begin{bmatrix}
B &AB &\cdots & A^{k-1}B
  \end{bmatrix}$ has rank equal to $k$.
  
\end{definition}
\begin{remark}
  \label{rem:reachableifunimodular}
A pair $(A,B)$ is reachable if and only if the polynomial matrix $[xI_k-A \; B]$ is unimodular.  
\end{remark}

\begin{definition}
Associate with each pair $(A,B)\in M_{k,k}(F)\times M_{k,\ell}(F)$ a sequence of integers $p_i(i\geq 1)$ by defining $p_1:=\mathrm{rank}\; B$ and for $i\geq 2$, 
  $$
p_i:= \rank  \begin{bmatrix}
B &AB &\cdots & A^{i-1}B
\end{bmatrix}
- \rank  \begin{bmatrix}
B &AB &\cdots & A^{i-2}B
  \end{bmatrix}.
  $$
  Consider the dual sequence $k_j(j\geq 1)$ defined by $k_j=\#\{r:p_r\geq j\}.$ The numbers $k_1,\ldots,k_\ell$ are called the \emph{controllability indices} 
  of the pair $(A,B)$.
  
\end{definition}

For any positive integer $m$, denote by $\GL_m(F)$ the general linear group of $m\times m$ nonsingular matrices over $F$.
Define \cite[P. 3]{ZabInv}
$$
\Gamma_{k,\ell}:=\left\{ \begin{bmatrix}
    P & {\bf 0}\\
    R & Q
  \end{bmatrix}\in \GL_{k+\ell}(F): P\in \GL_k(F), Q\in \GL_\ell(F), R\in M_{\ell,k}(F)\right\}.
$$
\begin{definition}
  Two pairs $(A_1,B_1)$ and $(A_2,B_2)$ in $ M_{k,k}(F)\times M_{k,\ell}(F)$ are said to be $\Gamma_{k,\ell}$-equivalent \cite[Def. 2.1]{ZabInv} if there exists a matrix $P\in \Gamma_{k,\ell}$ such that for each pair of matrices $C_1\in M_{\ell,k}(F)$ and $D_1\in M_\ell(F)$, there exist matrices $C_2\in M_{\ell,k}(F)$ and $D_2\in M_{\ell}(F)$ such that
  $$
P \begin{bmatrix}
    A_1 & B_1\\
    C_1 & D_1
  \end{bmatrix} P^{-1}=\begin{bmatrix}
    A_2 & B_2\\
    C_2 & D_2
  \end{bmatrix}.
  $$
\end{definition}
When the values of $k,\ell$ are clear from the context, we refer to $\Gamma_{k,\ell}$-equivalence simply as $\Gamma$-equivalence. The following result (\cite{Brunovsk1970ACO}, \cite[Lem. 2.7]{ZabInv}) is due to Brunovsky.
\begin{theorem}
  \label{th:Brunovsky}
  Let $(A,B)\in M_{k,k}(F)\times M_{k,\ell}(F)$. Suppose $(A,B)$ is a reachable pair with $\rank B=r$ and $k_1\geq \cdots\geq k_r>k_{r+1}=\cdots=k_\ell(=0)$ are the controllability indices of $(A,B)$. Then $(A,B)$ is $\Gamma$-equivalent to a pair $(A_c,B_c)\in  M_{k,k}(F)\times M_{k,\ell}(F)$ of the following form:
  \begin{enumerate}[i)]
  \item $A_c$ is the block diagonal matrix $\diag (A_1,\ldots,A_r)$ where $A_i$ is the $k_i\times k_i$ matrix
    $$
\begin{bmatrix}
  {\bf 0} & I_{k_i-1}\\
  0 & {\bf 0}
  \end{bmatrix};
  $$
\item $B_c$ is of the block form $[ B'\;  {\bf 0}]$, where $B'$ denotes the $k\times r$ matrix
  $$
B'=\begin{bmatrix}
    E_1 \\
    \vdots\\
    E_r
  \end{bmatrix}; \mbox{ where } E_i=\begin{bmatrix}
    {\bf 0}\\
   e_i
  \end{bmatrix} \in M_{k_i\times r}(F),
  $$
  and $e_i$ denotes the $i$\textsuperscript{th} row of the $r\times r$ identity matrix.
  \end{enumerate}
\end{theorem}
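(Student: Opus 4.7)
My plan is to unpack $\Gamma$-equivalence as the classical feedback-equivalence action and then reduce $(A,B)$ to canonical form via three kinds of moves: input basis change, state basis change, and state feedback. A direct block computation with $P = \bigl[\begin{smallmatrix} P_1 & 0 \\ R & Q \end{smallmatrix}\bigr]$ and $P^{-1} = \bigl[\begin{smallmatrix} P_1^{-1} & 0 \\ -Q^{-1}RP_1^{-1} & Q^{-1} \end{smallmatrix}\bigr]$ reveals that the top block row of $P\bigl[\begin{smallmatrix}A_1 & B_1 \\ C_1 & D_1\end{smallmatrix}\bigr]P^{-1}$ equals $\bigl[\,P_1(A_1 + B_1 K)P_1^{-1} \;\; P_1 B_1 Q^{-1}\,\bigr]$, where $K := -Q^{-1}R$ ranges over all of $M_{\ell,k}(F)$ independently of $C_1$ and $D_1$. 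Thus $\Gamma$-equivalence permits simultaneous state basis change by $P_1$, input basis change by $Q$, and state feedback by $K$.

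Next I would normalize $B$. Since $\rank B = r$, some $Q \in \GL_\ell(F)$ reduces $B$ to $[B'\;\; 0]$ with $B' \in M_{k,r}(F)$ of full column rank; let $b_1,\ldots,b_r$ denote its columns. Under this normalization any feedback $K = \bigl[\begin{smallmatrix}K_1 \\ K_2\end{smallmatrix}\bigr]$ affects $A$ only through $BK = B'K_1$, so feedback effectively becomes $A \mapsto A + B'K_1$ with $K_1 \in M_{r,k}(F)$ arbitrary.

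The core step is to build a basis of $F^k$ adapted to the controllability indices. Let $V_i := \sum_{j=0}^{i-1} A^j\,\mathrm{col}(B)$ and $p_i := \dim V_i - \dim V_{i-1}$; reachability forces $V_k = F^k$, and the conjugate-partition identity $k_j = \#\{i : p_i \geq j\}$ gives $\sum_i k_i = k$. A careful inductive peeling of the filtration $V_1 \subset V_2 \subset \cdots$ produces pivot vectors $v_1,\ldots,v_r \in \mathrm{col}(B')$ together with a feedback $K_1 \in M_{r,k}(F)$ such that, writing $\widetilde{A} := A + B'K_1$, the set
$$\mathcal{B} := \{\widetilde{A}^{\,j} v_i : 1 \leq i \leq r,\; 0 \leq j \leq k_i - 1\}$$
is a basis of $F^k$ with $\widetilde{A}^{k_i} v_i = 0$ for every $i$. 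This simultaneous construction of the $v_i$ and $K_1$ is the principal obstacle: one must arrange both that each chain has the prescribed length $k_i$ and that it terminates at zero after feedback. The standard route (cf.\ Brunovsk\'y \cite{Brunovsk1970ACO}) lifts representatives of a basis of the top quotient $V_{k_1}/V_{k_1 - 1}$ back through the filtration and then uses feedback to absorb each ``overshoot'' $\widetilde{A}^{k_i}v_i$ into $\mathrm{col}(B')$.

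Once $\mathcal{B}$ and $K_1$ are in hand, the canonical form follows by a final change of state basis. Let $U \in \GL_k(F)$ be the matrix whose columns are the elements of $\mathcal{B}$ in the reverse-chain order $\widetilde{A}^{k_1-1}v_1, \widetilde{A}^{k_1-2}v_1, \ldots, v_1, \widetilde{A}^{k_2-1}v_2, \ldots, v_r$. The shift rule $\widetilde{A}\cdot \widetilde{A}^{\,j}v_i = \widetilde{A}^{\,j+1}v_i$ and the terminal relation $\widetilde{A}^{k_i}v_i = 0$ immediately yield $U^{-1}\widetilde{A}U = \diag(A_1,\ldots,A_r) = A_c$, while the observation that each pivot $v_i$ occupies position $k_1+\cdots+k_i$ of the ordered basis forces $U^{-1}B'$ to be the vertically stacked block matrix built from the $E_i$'s of the theorem. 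The trailing $\ell - r$ zero columns of $B$ are preserved under $U^{-1}$, so the whole matrix becomes $B_c = [B'\;\; 0]$, giving the desired equivalence $(A,B) \sim_\Gamma (A_c, B_c)$.
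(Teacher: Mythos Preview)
The paper does not actually prove Theorem~\ref{th:Brunovsky}; it simply states the result and cites Brunovsk\'y \cite{Brunovsk1970ACO} and Zaballa \cite[Lem.~2.7]{ZabInv} for the proof. Your sketch therefore goes well beyond what the paper itself supplies.

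As a sketch of the standard argument, your proposal is sound. Your block computation correctly identifies $\Gamma_{k,\ell}$-equivalence with the feedback group action $(A,B)\mapsto (P_1(A+BK)P_1^{-1},\,P_1BQ^{-1})$, and the subsequent strategy---normalize $B$ to $[B'\;0]$, build nilpotent chains of lengths $k_1,\ldots,k_r$ starting from pivots in $\mathrm{col}(B')$, then read off $(A_c,B_c)$ in the reverse-chain basis---is exactly the classical route. Two small points are worth tightening. First, you defer the ``inductive peeling'' entirely to the reference; since this is the only nontrivial step, a self-contained proof would need to actually exhibit the pivots and the feedback $K_1$ (the usual device is to choose, for each $j$, columns $b_{i_1},\ldots,b_{i_{p_j}}$ of $B'$ whose images span $V_j/V_{j-1}$, and then use feedback to kill the components of $\widetilde{A}^{k_i}v_i$ lying in $\mathrm{col}(B')$). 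Second, your pivots $v_i$ lie in $\mathrm{col}(B')$ but need not be the columns of $B'$; one further right-multiplication by an element of $\GL_r(F)$ on the first $r$ input coordinates is needed so that $U^{-1}B'$ has exactly the columns $e_{k_1+\cdots+k_i}$ required by the stated form of $B_c$.
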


The following lemma is our main result.
\begin{lemma}\label{eT}
Let $n,k$ be integers with $0\leq k<n-1$. Let $V$ be an $n$-dimensional vector space over $\Fq$ and let $W,W'$ be fixed subspaces of $V$ of dimensions $k$ and $k+1$ respectively with $W\subset W'$. Suppose $T:W\to V$ is a simple linear transformation. Then the number of simple linear transformations $T':W'\to V$ such that $T'_{\mid W}=T$ (the restriction of $T'$ to $W$ is $T$) is equal to $q^n-q^{k+1}$.
\end{lemma}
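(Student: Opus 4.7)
The plan is to recast the question as one about unimodularity of polynomial matrices via Proposition \ref{pr:simplematrix}, reduce to Brunovsky canonical form via Theorem \ref{th:Brunovsky}, and finish by invoking Lemma \ref{rprime}. Choose an ordered basis $v_1,\ldots,v_n$ of $V$ with $v_1,\ldots,v_k$ a basis of $W$ and $v_{k+1}$ completing to a basis of $W'$, and let $Y\in M_{n,k}(\Fq)$ be the matrix of $T$. Then $T$ is simple iff $xI_{n,k}-Y$ is unimodular, an extension $T':W'\to V$ of $T$ is determined by $y=T'(v_{k+1})\in\Fq^n$, and $T'$ is simple iff $[xI_{n,k}-Y\mid xe_{k+1}-y]\in M_{n,k+1}(\Fq[x])$ is unimodular; the task is to count such $y$. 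Writing $Y=\bigl[\begin{smallmatrix}A\\B\end{smallmatrix}\bigr]$, the unimodularity of $xI_{n,k}-Y$ is equivalent (by transposition and Remark \ref{rem:reachableifunimodular}) to the reachability of $(A^T,-B^T)\in M_k(\Fq)\times M_{k,n-k}(\Fq)$.

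By Theorem \ref{th:Brunovsky}, a $\Gamma$-equivalence brings $(A^T,-B^T)$ to Brunovsky canonical form with controllability indices $k_1\geq\cdots\geq k_r>0=k_{r+1}=\cdots=k_{n-k}$, where $r=\rank B$. The $\Gamma$-equivalence is realized by an $\Fq$-linear change of basis of $V$ fixing $W$; it induces an $\Fq$-linear bijection on the parameterizing vectors $y$ and preserves unimodularity of the extended matrix, so the count is unchanged. The image of $v_{k+1}$ under the basis change is some $w'\in V\setminus W$ whose coordinate vector has a nonzero ``lower'' part $w'_{\mathrm{bot}}\in\Fq^{n-k}$, so after the reduction we must count $y\in\Fq^n$ making $[xI_{n,k}-Y\mid xw'-y]$ unimodular for the canonical $Y$ and this specific $w'$. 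A direct computation using the shift-up blocks of $A^T$ and the single-entry rows of $B^T$ produces an explicit polynomial basis $\psi_1,\ldots,\psi_{n-k}$ of the left nullspace of $xI_{n,k}-Y$: for $1\leq i\leq r$, $\psi_i$ contains the entries $1,x,\ldots,x^{k_i-1}$ in the $i$-th shift block of the top $k$ coordinates and $-x^{k_i}$ in column $k+i$, while for $r<i\leq n-k$, $\psi_i$ is the standard covector $e_{k+i}^T$. Setting $c_i:=\psi_i(xw'-y)$, unimodularity is equivalent to $\gcd(c_1,\ldots,c_{n-k})=1$, and crucially the $n$ coordinates of $y$ partition into disjoint groups each appearing in exactly one $c_i$.

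The count is finished by a case analysis on the structure of $w'_{\mathrm{bot}}$. If some coordinate of $w'_{\mathrm{bot}}$ indexed by $i>r$ is nonzero, the corresponding $c_i$ is linear in $x$ with nonzero leading coefficient, and a count of common linear factors of the $c_j$'s over $\Fq$---using that each evaluation $c_j(\lambda)=0$ imposes one linear condition on its independent $y$-coordinates---gives exactly $q^{k+1}$ bad $y$'s, hence $q^n-q^{k+1}$ good ones. If instead $w'_{\mathrm{bot}}$ is supported on the first $r$ coordinates, then $c_i=-y_{k+i}$ for $i>r$; the subcase where some such $y_{k+i}$ is nonzero makes the gcd a unit automatically (contributing $q^n-q^{k+r}$), while the remaining subcase reduces to $\gcd(c_1,\ldots,c_r)=1$ with each $c_i$ having the shape $a_ix^{d_i+1}+h_i$ demanded by Lemma \ref{rprime}. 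Conditioning on the varying leading coefficients (those $y_{k+i}$ that become leading because the corresponding $w'_{\mathrm{bot},i}=0$) keeps $\mathbf{a}$ nonzero thanks to the nonzero $w'$-contributions, and Lemma \ref{rprime} then yields $q^{k+r}-q^{k+1}$ for this subcase; adding the two subcases gives $q^n-q^{k+1}$ overall. The main obstacle is the bookkeeping in this case analysis, since the effective degrees in Lemma \ref{rprime} differ by one depending on whether the leading coefficient of $c_i$ comes from a fixed $w'$-coordinate or a conditioned-upon $y$-coordinate, and the resulting degree sequence must be reordered to fit the lemma's non-increasing-degree hypothesis.
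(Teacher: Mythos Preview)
Your argument is correct and follows essentially the same route as the paper: pass to Brunovsky canonical form via Theorem~\ref{th:Brunovsky}, rewrite the unimodularity of the augmented matrix as a gcd condition on $n-k$ explicit polynomials whose lower-order coefficients are parametrized bijectively by the coordinates of $y$, and count with Lemma~\ref{rprime}. The paper's execution is more streamlined, however: it chooses the extra basis vector inside the complement $U$ of $W$ (so the ``top part'' of $w'$ vanishes outright) and then applies Lemma~\ref{rprime} \emph{once} with $m=n-k$, degree sequence $(k_1,\ldots,k_r,0,\ldots,0)$ and leading-coefficient vector $\mathbf a=(c_{k+1},\ldots,c_n)$, which already allows some $a_i$ to be zero and is already non-increasing---this single application yields $q^n-q^{k+1}$ directly and subsumes your entire case split, so the conditioning and reordering you describe as the ``main obstacle'' are not actually needed.
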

\begin{proof}
First suppose $k=0$. In this case $W'$ is spanned by some nonzero vector $w\in V$. Then $T'$ is simple precisely
when $T'(w)$ does not lie in the span of $w$. So the number of such linear transformations is clearly $q^n-q$. 

Suppose $k\geq 1$. Let $\mathcal{B}_k = \{v_1,\ldots,v_k \}$ be an ordered basis for $W$ and
$\mathcal{B}_n = \{v_1,\ldots,v_n \}$ be an ordered basis for $V$ obtained by extending $\mathcal{B}_k$.
Let $Y$ be the matrix of $T$ with respect to $\mathcal{B}_k$ and $\mathcal{B}_n$.
Since $T$ is simple, $ {\bf Y} = x I_{n,k} - Y$ is unimodular by Proposition~\ref{pr:simplematrix}.
Suppose that $Y = \left[\!\!\begin{array}{c} A\\ C\\ \end{array}\!\!\right]$ for some $A \in M_{k,k}(\mFq)$
and $C\in M_{n-k,k}(\mFq)$. Since $ {\bf Y} = x I_{n,k} - Y $ is unimodular, it follows by Remark~\ref{rem:reachableifunimodular} that $(A^t,C^t)$ is a reachable pair. 
Suppose that $\mbox{rank}(C) = r$, and $k_1\ge k_2\ge \cdots \ge k_r > k_{r+1} = \cdots = k_{n-k} = 0$
are the controllability indices of the pair $(A^t,C^t)$.   
We have $k_1 + \cdots + k_r = k$. By Theorem \ref{th:Brunovsky} 
we may assume that $A$ and $C$ are of the following form:

$A = \mbox{diag}(A_1,A_2,\ldots,A_r)$, where $A_i$ is the $k_i\times k_i$ matrix
$\left[\begin{array}{cc} \bzero & 0\\ I_{k_i-1} & \bzero\\ \end{array}\right];$\\
\indent$C = \left[\!\!\begin{array}{c} C'\\ \bzero\\ \end{array}\!\!\right]$, where
$ C' = \left[E_1\,\cdots\,E_r\right] \in M_{r,k}(\mFq)$ with 
$E_i = [\bzero\,e_i] \in M_{r,k_i}(\mFq),$
and $e_i$ denotes the $i$\textsuperscript{th} column of the $r\times r$ identity matrix for $1\leq i\leq r$.
Let $\lambda_s = \sum_{i=1}^s k_i$ for $1\le s\le r$ and set $\lambda_0=0$.
Then the linear transformation $T$ can be described by
\begin{equation}\label{eq:T}
T(v_j) =
\begin{cases}
v_{k+s} &\ \mbox{if}\ j = \lambda_s\ \mbox{for some}\ s,\, 1\le s\le r;\\
v_{j+1} &\  \mbox{otherwise,}
\end{cases}
\end{equation}
where $1\le j\le k$.
Also the matrix $Y$ can be described by
\[ Y = [{\bf e}_{\lambda_0+2},\ldots,{\bf e}_{\lambda_1},{\bf e}_{k+1}, {\bf e}_{\lambda_1+2},\ldots,{\bf e}_{\lambda_2},{\bf e}_{k+2},
 \ldots, {\bf e}_{\lambda_{r-1}+2},\ldots,{\bf e}_{\lambda_r},{\bf e}_{k+r}],\]
where ${\bf e}_i$ is the $i$\textsuperscript{th} column of the identity matrix $I_n$.
Let $U = \mathrm{span}(\mathcal{B}_n\setminus\mathcal{B}_k)$ be the subspace of $V$
spanned by $\{v_{k+1},\ldots,v_n\}$. We have $V = W\oplus U$.

Now $W\subset W'$ and $W'$ is of dimension $k+1$. Since $V = W\oplus U$, there is a nonzero vector $w\in W' \cap U$. Let $\{v_{k+1}'=w,v_{k+2}',\ldots,v_n'\}$ be an ordered basis for $U$.
Since $V = W\oplus U$, we have $\mathcal{B}_n' = \{v_1,\ldots,v_k,v_{k+1}',\ldots,v_n'\}$ is an ordered basis for $V$.
Let $R$ be the matrix of the identity map $1_V$ on $V$ with respect to the bases $\mathcal{B}_n'$ and $\mathcal{B}_n$.
Note that the matrix $R$ can be expressed as
\[\left[\begin{array}{cc} I_k & \bzero\\ \bzero & S\\ \end{array}\right],\]
where $S$ is the matrix of the identity map $1_U$ on $U$ with respect to the bases
$\mathcal{B}_n'\setminus \mathcal{B}_k$ and $\mathcal{B}_n\setminus \mathcal{B}_k$.
Let $v_{k+1}' = \sum_{j=k+1}^{n} c_j v_j$ for some scalars $c_j$. Then the first column of $S$
is given by $(c_{k+1},\ldots,c_n)^t\in \mFq^{n-k}$.
The matrix $\tilde{Y}$ of $T$ with respect to $\mathcal{B}_k$ and $\mathcal{B}_n'$ is given by
$\tilde{Y} = R^{-1}Y$. Define $\mathcal{B}_{k+1}' = \{v_1,\ldots,v_k,v_{k+1}' \}$ and let $Y'$ be the matrix of $T'$ with respect to the bases $\mathcal{B}_{k+1}'$ and
$\mathcal{B}_n'$. Since $T'_{\mid W}=T$ we have $Y' = R^{-1} [Y\,{\bf b}]$ for some column
vector ${\bf b}\in \mathbb{F}_q^n$.
By Proposition~\ref{pr:simplematrix}, $T'$ is simple if and only if ${\bf Y}' = x I_{n,k+1} - Y'$ is unimodular.
Let ${\bf Y_b} = R {\bf Y}' = R (x I_{n,k+1} - R^{-1} [Y\,{\bf b}]) = x R_{k+1} - [Y\,{\bf b}]$,
where $R_{k+1}$ is the submatrix formed by the first $(k+1)$ columns of $R$.
We have ${\bf Y_b} = [{\bf Y}\, x{\bf c}-{\bf b}]$, where
${\bf c} = (0,\ldots,0,c_{k+1},\ldots,c_n)^t\in \mFq^n$.

Suppose $\bfb = (b_1, b_2, \ldots, b_n)^t \in \mFq^n$. Then the matrix $Y_{\bf b} = [Y\ \bfb]$ is of the form
\begin{align}\label{matyp}
Y_{\bf b} = \left[\begin{array}{ccccc}
A_1 & \bzero & \ldots & \bzero & \bfb_1 \\
\bzero & A_2 & \ldots & \bzero & \bfb_2 \\
\vdots & \vdots & \ddots & \vdots & \vdots \\
\bzero & \bzero & \ldots & A_r & \bfb_r\\
E_1 & E_2 & \ldots & E_r & \tilde{\bfb}\\
\bzero & \bzero & \ldots & \bzero & \hat{\bfb}\\
\end{array}\right],
\end{align}
where $\bfb_i = (b_{\lambda_{i-1}+1},\ldots,b_{\lambda_{i}})^t \in \mFq^{k_i}$ for $1\le i\le r$,
$\tilde{\bfb} = (b_{k+1},\ldots,b_{k+r})^t\in \mFq^{r}$, and
$\hat{\bfb} = (b_{k+r+1},\ldots,b_{n})^t\in \mFq^{n-k-r}$.

Now consider the polynomial matrix ${\bf Y}_{\bf b} = [{\bf Y}\, x{\bf c}-{\bf b}]$.
We permute the rows of ${\bf Y}_{\bf b}$ in the following way: for each $1\le i\le r-1$, arrange the $(k+i)$\textsuperscript{th} row of ${\bf Y}_{\bf b}$
in between the $i$\textsuperscript{th} and $(i+1)$\textsuperscript{th} block rows appearing in~(\ref{matyp}).
The resulting matrix ${\bf Z}$ is of the following form:
\begin{align}\label{matz}
{\bf Z} = \left[\begin{array}{ccccc}
{\bf Z}_1 & \bzero & \ldots & \bzero & \bfb_1' \\
\bzero & {\bf Z}_2 & \ldots & \bzero & \bfb_2' \\
\vdots & \vdots & \ddots & \vdots & \vdots \\
\bzero & \bzero & \ldots & {\bf Z}_r & \bfb_r'\\
\bzero & \bzero & \ldots & \bzero & \bfb'\\
\end{array}\right],
\end{align} 
where ${\bf Z}_i = x \left[\!\!\begin{array}{c} I_{k_i}\\ \bzero\\ \end{array}\!\!\right] -
\left[\!\!\begin{array}{c} \bzero\\ I_{k_i}\\\end{array}\!\!\right]$,
$\bfb_i' = \left[\!\!\begin{array}{c} -\bfb_i\\ c_{k+i}x-b_{k+i}\\\end{array}\!\!\right]$ for $1\le i\le r \mbox{ and } \bfb' = (c_{k+r+1}x-b_{k+r+1},\ldots,c_{n}x-b_{n})^t$.
Now we apply the following sequence of elementary row operations to ${\bf Z}$ to eliminate $x$ in
the first $k$ columns: in the first block row appearing in~(\ref{matz}), add $x$ times the
$(i+1)$\textsuperscript{th} row to the $i$\textsuperscript{th} row successively for $i=k_1, k_1-1, \ldots, 1$
in that order. Similarly we apply elementary row
operations to the other block rows. By appropriate elementary column operations, the entries in the last
column can be made zero at suitable positions. Eventually we can transform the matrix to the following form:
\begin{align}\label{matzp}
{\bf Z}' = \left[\begin{array}{ccccc}
{\bf Z}'_1 & \bzero & \ldots & \bzero & \bfb_1'' \\
\bzero & {\bf Z}_2' & \ldots & \bzero & \bfb_2'' \\
\vdots & \vdots & \ddots & \vdots & \vdots \\
\bzero & \bzero & \ldots & {\bf Z}_r' & \bfb_r''\\
\bzero & \bzero & \ldots & \bzero & \bfb'\\
\end{array}\right],
\end{align}  
where ${\bf Z}_i' = -\left[\!\!\begin{array}{c} \bzero\\ I_{k_i}\\\end{array}\!\!\right]$,
$\bfb_i'' = \left[\!\!\begin{array}{c} f_i\\ \bzero\\\end{array}\!\!\right]$ with
$f_i(x) = c_{k+i} x^{k_i+1} - b_{k+i} x^{k_i} - \sum_{j=1}^{k_i} b_{\lambda_{i-1}+j} x^{j-1}$ for $1\le i\le r$
and $\bfb' = (c_{k+r+1}x-b_{k+r+1},\ldots,c_{n}x-b_{n})^t$.

Let $g = \gcd(f_1,f_2,\ldots,f_r, c_{k+r+1} x - b_{k+r+1},\ldots,c_n x - b_n)$.
The matrix ${\bf Z}'$ is unimodular if and only if $g=1$. By Lemma~\ref{rprime}
it follows that the number of vectors $\bfb \in \mFq^n$  such that $g = 1$ is given by $q^n- q^{k+1}$.
As ${\bf Y}'$ and ${\bf Z}'$ are equivalent, the result follows.

\end{proof}

The lemma can be recast in the setting of matrices as follows.

\begin{corollary}\label{eumm}
  Let $Y\in M_{n,k}(\Fq)$ be such that the linear matrix polynomial $xI_{n,k}-Y$ is unimodular. For each column vector ${\bf b}\in \F_{q}^n$ let $Y_{\bf b}=[Y \;{\bf b}]\in M_{n,k+1}(\Fq)$. 
  Then the number of column vectors ${\bf b}\in \F_{q}^n$ for which $xI_{n,k+1}-Y_{\bf b}$ is unimodular equals $q^n-q^{k+1}$.  
\end{corollary}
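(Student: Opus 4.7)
The plan is to derive this corollary as a direct matrix-language reformulation of Lemma~\ref{eT}, using Proposition~\ref{pr:simplematrix} as the dictionary between unimodular linear matrix polynomials and simple linear transformations.

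First I would fix the setup: take $V=\Fq^n$ with standard ordered basis $\mathcal{B}_n=\{v_1,\ldots,v_n\}$, let $\mathcal{B}_k=\{v_1,\ldots,v_k\}$ span the subspace $W$, and $\mathcal{B}_{k+1}=\{v_1,\ldots,v_{k+1}\}$ span the subspace $W'$ (so $W\subset W'$, with $\dim W=k$ and $\dim W'=k+1$). Let $T\colon W\to V$ be the linear transformation whose matrix with respect to $\mathcal{B}_k$ and $\mathcal{B}_n$ is the given $Y$. The hypothesis that $xI_{n,k}-Y$ is unimodular is, by Proposition~\ref{pr:simplematrix}, exactly the statement that $T$ is simple.

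Next I would identify the set of column vectors $\mathbf{b}\in\Fq^n$ with the set of extensions of $T$ to $W'$. Any linear map $T'\colon W'\to V$ restricting to $T$ on $W$ is determined by its value on $v_{k+1}$; writing $T'(v_{k+1})=\sum_{i=1}^n b_i v_i$ gives a bijection $\mathbf{b}\leftrightarrow T'$, and the matrix of this $T'$ with respect to $\mathcal{B}_{k+1}$ and $\mathcal{B}_n$ is precisely $Y_{\mathbf{b}}=[Y\;\mathbf{b}]$. By Proposition~\ref{pr:simplematrix} again, $xI_{n,k+1}-Y_{\mathbf{b}}$ is unimodular if and only if $T'$ is simple. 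Counting the $\mathbf{b}$'s for which $xI_{n,k+1}-Y_{\mathbf{b}}$ is unimodular is therefore the same as counting simple extensions $T'\colon W'\to V$ of $T$. When $0\le k<n-1$, Lemma~\ref{eT} gives this count as $q^n-q^{k+1}$, which is exactly the claim.

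The only point requiring a brief comment is the boundary case $k=n-1$ (where Lemma~\ref{eT} does not apply). In that case $Y_{\mathbf{b}}$ is $n\times n$ and $xI_{n,k+1}-Y_{\mathbf{b}}=xI_n-Y_{\mathbf{b}}$; unimodularity would force its unique $n\times n$ minor, the characteristic polynomial of $Y_{\mathbf{b}}$, to be a unit in $\Fq[x]$, which is impossible since it is monic of degree $n\ge 1$. Hence the count is $0$, agreeing with $q^n-q^{k+1}=q^n-q^n=0$. There is no real obstacle here: the content is entirely in Lemma~\ref{eT}, and the corollary is a transparent repackaging via the bijection between $\mathbf{b}$ and extensions $T'$ of $T$.
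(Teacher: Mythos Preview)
Your proposal is correct and follows exactly the route the paper intends: the paper states the corollary as a direct recasting of Lemma~\ref{eT} in matrix language, with Proposition~\ref{pr:simplematrix} providing the translation, and gives no separate proof. Your additional treatment of the boundary case $k=n-1$ (which Lemma~\ref{eT} excludes) is a harmless completeness check not present in the paper.
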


We can now give an alternate proof of \cite[Thm. 3.8]{zerokernel} concerning the number of simple linear transformations with a fixed domain.
\begin{corollary}
  Let $V$ be an $n$-dimensional vector space over $\Fq$ and $W$ be a proper $k$-dimensional subspace of $V$. The number of simple linear transformations $T:W\to V$ equals
  $$
\prod_{i=1}^{k}(q^n-q^i).
  $$
\end{corollary}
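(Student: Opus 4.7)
The plan is to prove the corollary by induction on $k$, with Lemma~\ref{eT} providing the inductive engine. The base case is essentially the $k=0$ clause inside the proof of Lemma~\ref{eT}: starting from the unique trivial map on the zero subspace, that clause produces $q^n-q$ simple maps on any $1$-dimensional subspace of $V$, matching $\prod_{i=1}^{1}(q^n-q^i)$.

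For the inductive step, fix a proper $(k+1)$-dimensional subspace $W'$ of $V$ and choose any $k$-dimensional $W \subset W'$. The strategy is to partition the simple maps $T' : W' \to V$ according to their restriction $T = T'_{\mid W}$. The induction hypothesis supplies $\prod_{i=1}^{k}(q^n-q^i)$ candidates for $T$, and Lemma~\ref{eT}, which applies because $k \leq n-2$, supplies exactly $q^n - q^{k+1}$ simple extensions of each such $T$ to $W'$. Multiplying gives the claimed formula.

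The only substantive point is the legitimacy of this fiber-wise count, namely that the restriction of a simple map is simple. From Definition~\ref{def:simple} this is not obvious: the $T$-invariance condition $T'(U \cap W) \subseteq U$ is a priori weaker than the $T'$-invariance condition $T'(U \cap W') \subseteq U$, so a $T$-invariant subspace need not be $T'$-invariant. I would bypass this via Proposition~\ref{pr:simplematrix}. In compatible bases the matrix of $T'$ has the form $Y' = [Y\ \mathbf{b}]$ where $Y$ is the matrix of $T$. Cofactor expansion along the last column expresses every $(k+1)\times(k+1)$ minor of $xI_{n,k+1} - Y'$ as an $\Fq[x]$-linear combination of $k\times k$ minors of $xI_{n,k} - Y$, so any common divisor of the latter divides every such larger minor. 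Hence unimodularity of $xI_{n,k+1} - Y'$ forces unimodularity of $xI_{n,k} - Y$, and Proposition~\ref{pr:simplematrix} returns the conclusion that $T$ is simple.

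This restriction-preserves-simplicity step is the hardest part of the plan; the remainder is bookkeeping. Once it is in place the induction runs mechanically, with Lemma~\ref{eT} supplying the multiplicative factor $q^n - q^{k+1}$ at each stage and telescoping to the desired product $\prod_{i=1}^{k}(q^n - q^i)$.
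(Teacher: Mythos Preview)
Your proposal is correct and follows the same inductive route the paper intends, building up one dimension at a time via Lemma~\ref{eT} (equivalently Corollary~\ref{eumm}). You go further than the paper by explicitly justifying that the restriction of a simple map is simple---via the cofactor-expansion argument on minors---whereas the paper leaves this implicit, relying on the remark after Definition~\ref{def:simple} and the cited reference.
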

We may use Proposition \ref{pr:simplematrix} to reformulate the corollary in terms of matrices. This allows us to answer Question \ref{q:first} stated in the introduction.
\begin{corollary}\label{cor:numsimp}
  Let $n,k$ be positive integers with $k<n$. The number of matrices $A\in M_{n,k}(\Fq)$ such that $xI_{n,k}-A$ is unimodular equals
    $$
\prod_{i=1}^{k}(q^n-q^i).
  $$
\end{corollary}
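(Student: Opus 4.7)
The plan is to use Proposition \ref{pr:simplematrix} to convert the question into one about counting simple linear transformations, where the preceding corollary immediately supplies the answer. Concretely, I would fix an $n$-dimensional vector space $V$ over $\Fq$ with an ordered basis $\mathcal{B}_n=\{v_1,\ldots,v_n\}$, let $W$ be the $k$-dimensional subspace spanned by $\mathcal{B}_k=\{v_1,\ldots,v_k\}$, and recall that the assignment sending an $\Fq$-linear map $T: W \to V$ to its matrix with respect to $\mathcal{B}_k$ and $\mathcal{B}_n$ is a bijection from the set of linear maps $W \to V$ onto $M_{n,k}(\Fq)$. By Proposition \ref{pr:simplematrix} this bijection restricts to a bijection between simple linear transformations and matrices $A$ for which $xI_{n,k}-A$ is unimodular, so the count sought in the corollary equals the number of simple linear transformations $W \to V$, which the preceding corollary evaluates as $\prod_{i=1}^{k}(q^n-q^i)$.

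Alternatively one can give a purely matrix-theoretic induction on $k$ using Corollary \ref{eumm}. The base case $k=1$ is a direct verification: the entries of $xI_{n,1}-A$ are $x-a_1,-a_2,\ldots,-a_n$, and their GCD equals $1$ if and only if some $a_i$ with $i\ge 2$ is nonzero, giving $q^n-q$ matrices. For the inductive step, write any $A\in M_{n,k+1}(\Fq)$ as $A=[Y\ {\bf b}]$ with $Y\in M_{n,k}(\Fq)$ and ${\bf b}\in \Fq^n$; Laplace expansion along the last column of an arbitrary $(k+1)\times(k+1)$ minor of $xI_{n,k+1}-A$ expresses this minor as an $\Fq[x]$-linear combination of $k\times k$ minors of the first $k$ columns, namely of $xI_{n,k}-Y$, so the GCD of the $k\times k$ minors of $xI_{n,k}-Y$ divides the GCD of the $(k+1)\times(k+1)$ minors of $xI_{n,k+1}-A$. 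Hence $xI_{n,k+1}-A$ unimodular forces $xI_{n,k}-Y$ unimodular. Summing over the $\prod_{i=1}^{k}(q^n-q^i)$ unimodular matrices $Y$ (by the inductive hypothesis) and using Corollary \ref{eumm} to count the $q^n-q^{k+1}$ admissible vectors ${\bf b}$ for each such $Y$ yields the desired total $\prod_{i=1}^{k+1}(q^n-q^i)$. Either route is essentially mechanical; all genuine difficulty is concentrated in Lemma \ref{eT}, and hence in Corollary \ref{eumm}, so there is no remaining obstacle in this final reformulation.
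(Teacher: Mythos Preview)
Your proposal is correct, and your first route is exactly the paper's argument: Corollary~\ref{cor:numsimp} is obtained by translating the preceding corollary on simple linear transformations into matrix language via Proposition~\ref{pr:simplematrix}. Your second route, an induction on $k$ directly at the level of matrices using Corollary~\ref{eumm}, is also valid and is really the same induction carried out after applying Proposition~\ref{pr:simplematrix} rather than before; in either case all the content is in Lemma~\ref{eT}.
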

By repeated application of Corollary \ref{eumm} we obtain the following extension which is used later on in Sections \ref{sec:split} and \ref{sec:unimodular}.
\begin{lemma}\label{lem:extension}
Let $n, k, t$ be positive integers such that $k+t<n$. Suppose that the matrix polynomial $xI_{n,k}-Y$ is unimodular for some $Y\in M_{n,k}(\Fq)$. The number of matrices $A\in M_{n,t}(\mFq)$
such that the matrix polynomial 
\[ x I_{n,k+t} - [Y \; A]\] 
 is unimodular is equal to $\prod_{i=1}^{t} (q^n - q^{k+i})$.
\end{lemma}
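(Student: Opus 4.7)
The statement is set up perfectly for induction on $t$, with Corollary~\ref{eumm} serving as the base case. My plan is to induct, peeling off the rightmost column of $A$ at each step.

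The base case $t=1$ is literally Corollary~\ref{eumm}: given that $xI_{n,k}-Y$ is unimodular (and $k<n-1$, since $k+1\le k+t<n$), the number of ${\bf b}\in\Fq^n$ such that $xI_{n,k+1}-[Y\;{\bf b}]$ is unimodular is $q^n-q^{k+1}$, which matches $\prod_{i=1}^{1}(q^n-q^{k+i})$. For the inductive step, assume the lemma for the value $t-1$ (with $t\ge 2$), and consider the hypothesis $k+t<n$. I would write any $A\in M_{n,t}(\Fq)$ uniquely as $A=[A'\;{\bf b}]$, where $A'\in M_{n,t-1}(\Fq)$ and ${\bf b}\in\Fq^n$ is the final column. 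A basic observation is that if $xI_{n,k+t}-[Y\;A'\;{\bf b}]$ is unimodular, then so is its leftmost $n\times(k+t-1)$ submatrix $xI_{n,k+t-1}-[Y\;A']$; this is because any invariant factor of the submatrix divides an invariant factor of the full matrix (or, equivalently, the $(k+t-1)\times(k+t-1)$ minors of the submatrix are a subset of the $(k+t-1)\times(k+t-1)$ minors of the full matrix).

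Using this, I would partition the count according to the choice of $A'$. By the inductive hypothesis applied with parameter $t-1$ (whose hypothesis $k+(t-1)<n$ follows from $k+t<n$), the number of $A'\in M_{n,t-1}(\Fq)$ for which $xI_{n,k+t-1}-[Y\;A']$ is unimodular equals $\prod_{i=1}^{t-1}(q^n-q^{k+i})$. For any such $A'$, set $Y_1:=[Y\;A']\in M_{n,k+t-1}(\Fq)$. Since $xI_{n,k+t-1}-Y_1$ is unimodular and $(k+t-1)<n-1$ (as $k+t<n$ forces $k+t-1\le n-2$), Corollary~\ref{eumm} applies with $k$ replaced by $k+t-1$ and $Y$ replaced by $Y_1$, yielding exactly $q^n-q^{k+t}$ column vectors ${\bf b}\in\Fq^n$ for which $xI_{n,k+t}-[Y_1\;{\bf b}]=xI_{n,k+t}-[Y\;A]$ is unimodular.

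Multiplying the counts gives
\[
\left(\prod_{i=1}^{t-1}(q^n-q^{k+i})\right)(q^n-q^{k+t})=\prod_{i=1}^{t}(q^n-q^{k+i}),
\]
completing the induction. There is no genuine obstacle here: the only thing to check carefully is the index arithmetic to ensure that both the inductive hypothesis and Corollary~\ref{eumm} are applied with their hypotheses ($k+t'<n$ and the corresponding width being strictly less than $n-1$) satisfied, both of which follow directly from the assumption $k+t<n$.
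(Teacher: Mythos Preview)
Your argument is exactly the paper's: it states the lemma follows ``by repeated application of Corollary~\ref{eumm},'' and you have carefully written out that induction on $t$. One small wobble: your parenthetical justification that the $(k+t-1)\times(k+t-1)$ minors of the submatrix are a \emph{subset} of those of the full matrix gives the divisibility $D_{k+t-1}(M)\mid D_{k+t-1}(M')$, which is the wrong direction; the correct reason the leftmost block stays unimodular is either the interlacing $d_j'\mid d_{j+1}$ you allude to, Laplace expansion of each $(k+t)$-minor along the last column (which shows $D_{k+t-1}(M')\mid D_{k+t}(M)=1$), or, in the language of Proposition~\ref{pr:simplematrix}, the observation that the restriction of a simple map is simple.
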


\section{Splitting Subspaces}\label{sec:split}
Recall the definition of splitting subspace given earlier in the introduction.
\begin{definition}
  Let $d,m$ be positive integers and consider the vector space $\F_{q^{md}}$ over $\Fq$. 
  For any element $\alpha\in \F_{q^{md}}$ an $m$-dimensional subspace $W$ of $\F_{q^{md}}$ is $\alpha$-\emph{splitting} if
  \begin{align*}
    \F_{q^{md}}=W\oplus \alpha W\oplus \cdots \oplus \alpha^{d-1}W.
  \end{align*}
\end{definition}

Closely related to splitting subspaces are block companion matrices which we define below.
\begin{definition}
For positive integers $m,d$, an $(m,d)$-block companion matrix over $\Fq$ is a matrix in $M_{md}(\Fq)$ of the form
\begin{equation}
\label{bcm} 
\begin {pmatrix}
\mathbf{0} & \mathbf{0} & \mathbf{0} & . & . & \mathbf{0} & \mathbf{0} & C_0\\
I_m & \mathbf{0} & \mathbf{0} & . & . & \mathbf{0} & \mathbf{0} & C_1\\
. & . & . & . & . & . & . & .\\
. & . & . & . & . & . & . & .\\
\mathbf{0} & \mathbf{0} & \mathbf{0} & . & . & I_m & \mathbf{0} & C_{d-2}\\
\mathbf{0} & \mathbf{0} & \mathbf{0} & . & . & \mathbf{0} & I_m & C_{d-1}
\end {pmatrix},
\end{equation}
where $C_0, C_1, \dots , C_{d-1}\in M_m(\Fq)$ and $I_m$ denotes the $m\times m$ identity matrix over $\Fq$ while $\mathbf{0}$ denotes the zero matrix in $M_m(\Fq)$.
\end{definition}
\begin{remark}\label{ssc=bcmi}
It was shown (see the discussion after Conjecture 5.5 in  \cite{m=2} or Appendix A in \cite{split} for an overview) that the Splitting Subspace Theorem is in fact equivalent to the following theorem on block companion matrices.     
\end{remark}

\begin{theorem}
  \label{th:numbcm}
  For any irreducible polynomial $f\in \Fq[x]$ of degree $md$, the number of $(m,d)$-block companion matrices over $\Fq$ having $f$ as their characteristic polynomial equals
  $$
q^{m(m-1)(d-1)}\prod_{i=1}^{m-1}(q^m-q^i).
  $$
\end{theorem}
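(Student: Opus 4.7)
The plan is to deduce Theorem~\ref{th:numbcm} from the Splitting Subspace Theorem via the equivalence recorded in Remark~\ref{ssc=bcmi}, which I would make concrete through a double-counting argument. The Splitting Subspace Theorem itself will have been established earlier in the paper through Theorem~\ref{th:fibersize} (built on Wimmer's theorem), so I regard Theorem~\ref{th:numsplitting} as already in hand at this point.

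Fix an irreducible $f \in \Fq[x]$ of degree $md$ and a root $\alpha \in \F_{q^{md}}$, so that $\F_{q^{md}} = \Fq(\alpha)$. I would count in two ways the set
\[
\mathcal{X} = \{(W, (w_1, \ldots, w_m)) : W \text{ is an } \alpha\text{-splitting } m\text{-subspace of } \F_{q^{md}},\ (w_i) \text{ an ordered basis of } W\}.
\]
On one side, combining Theorem~\ref{th:numsplitting} with the fact that an $m$-dimensional $\Fq$-space has $|\GL_m(\Fq)| = (q^m-1)\prod_{i=1}^{m-1}(q^m - q^i)$ ordered bases gives
\[
|\mathcal{X}| = \frac{q^{md}-1}{q^m-1}\, q^{m(m-1)(d-1)} \cdot |\GL_m(\Fq)| = (q^{md}-1)\, q^{m(m-1)(d-1)} \prod_{i=1}^{m-1}(q^m - q^i).
\]

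On the other side, I would build a surjection $\Phi$ from $\mathcal{X}$ onto the set $\mathcal{BC}_f$ of $(m,d)$-block companion matrices with characteristic polynomial $f$. Given $(W,(w_i)) \in \mathcal{X}$, the tuple
\[
\mathcal{B}' = (w_1, \ldots, w_m,\ \alpha w_1, \ldots, \alpha w_m,\ \ldots,\ \alpha^{d-1}w_1, \ldots, \alpha^{d-1}w_m)
\]
is an $\Fq$-basis of $\F_{q^{md}}$, and the matrix $\Phi(W,(w_i))$ of multiplication by $\alpha$ in $\mathcal{B}'$ has the prescribed $(m,d)$-block companion shape: the shift pattern is forced in the first $m(d-1)$ columns, and the last $m$ columns record the $\Fq$-linear expansion of $\alpha^d w_i$ in $\mathcal{B}'$. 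Its characteristic polynomial equals the minimal polynomial of $\alpha$, which is $f$.

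The crux is to show that each fiber of $\Phi$ has size exactly $q^{md}-1$. For a fixed $M \in \mathcal{BC}_f$, a preimage of $M$ corresponds to an $\Fq$-linear isomorphism $\phi : \F_{q^{md}} \to \Fq^{md}$ intertwining multiplication by $\alpha$ with left multiplication by $M$, from which one recovers $w_i = \phi^{-1}(e_i)$. Since $f$ is irreducible of degree $md$, it is both the characteristic and the minimal polynomial of $M$, so $\Fq[M] \subset M_{md}(\Fq)$ is a field isomorphic to $\F_{q^{md}}$ and $\Fq^{md}$ becomes a one-dimensional $\Fq[M]$-module. Any such intertwining $\phi$ is then determined uniquely by $\phi(1) \in \Fq^{md}\setminus\{0\}$, giving $q^{md}-1$ choices. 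Equating the two expressions for $|\mathcal{X}|$ yields
\[
|\mathcal{BC}_f| = \frac{|\mathcal{X}|}{q^{md}-1} = q^{m(m-1)(d-1)} \prod_{i=1}^{m-1}(q^m - q^i),
\]
as required. The main obstacle, granting the Splitting Subspace Theorem, is the clean justification of the constant fiber size; this reduces to the module-theoretic observation above, which in turn depends on the fact that the minimal polynomial of $M$ achieves the full degree $md$. Once that is secured, the rest is routine arithmetic.
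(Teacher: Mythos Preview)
Your double-counting argument is mathematically sound: the map $\Phi$ from $\mathcal{X}$ to $\mathcal{BC}_f$ is well-defined and surjective, and the fiber size $q^{md}-1$ is correctly justified via the $\Fq[x]$-module structure. This makes explicit one direction of the equivalence alluded to in Remark~\ref{ssc=bcmi}.

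However, your proof reverses the paper's logic and becomes circular within it. In the paper, Theorem~\ref{th:fibersize} is proved first, and Theorem~\ref{th:numbcm} is then an \emph{immediate} specialization: set $\ell=md$, note that $\calM(md,m;q)$ is precisely the set of $(m,d)$-block companion matrices, and simplify
\[
\prod_{t=1}^{m-1}\bigl(q^{md}-q^{md-t}\bigr)=\prod_{i=1}^{m-1}q^{m(d-1)}\bigl(q^{m}-q^{i}\bigr)=q^{m(m-1)(d-1)}\prod_{i=1}^{m-1}(q^m-q^i).
\]
This is exactly Corollary~\ref{cor:irredfiber}. Only \emph{after} this does the paper invoke Remark~\ref{ssc=bcmi} to obtain the Splitting Subspace Theorem. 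So your premise that ``the Splitting Subspace Theorem will have been established earlier through Theorem~\ref{th:fibersize}'' is a misreading: the paper's route from Theorem~\ref{th:fibersize} to the Splitting Subspace Theorem passes through Theorem~\ref{th:numbcm}, not around it. Using the Splitting Subspace Theorem as input to prove Theorem~\ref{th:numbcm} therefore assumes what the paper is trying to establish.

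If instead you treat the Splitting Subspace Theorem as an external result (Chen--Tseng), your argument is a valid alternative proof of Theorem~\ref{th:numbcm}; but then it contributes nothing toward the paper's stated goal of giving a \emph{new} proof of the Splitting Subspace Theorem. The paper's own argument is both shorter (a one-line specialization) and logically prior.
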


  It is noteworthy that the problem of counting specific types of block companion matrices having irreducible characteristic polynomial has been considered in other contexts \cite{MR3385053,MR3622678,samtsr} where pseudorandom number generation is of interest. 
We now deduce Theorem \ref{th:numbcm} as a special case of Theorem \ref{th:fibersize} which we prove below, thereby providing an alternate proof of the Splitting Subspace Theorem. 
\begin{definition}
For positive integers $k,\ell$ with $k<\ell$, let $J^{\ell,k}$ denote the $\ell \times k$ matrix given by  
\[J^{\ell,k} := \left[\!\!\begin{array}{c}\bzero\\  I_k\\\end{array}\!\!\right].\]
\end{definition}

\begin{lemma}\label{asc}
The linear matrix polynomial
\[ x \left[\!\!\begin{array}{c} I_k\\ \bzero\\ \end{array}\!\!\right] - J^{\ell,k} \] is unimodular.
\end{lemma}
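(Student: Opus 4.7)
The plan is to establish unimodularity by exhibiting a single $k \times k$ minor of the polynomial matrix
\[ M := x\left[\!\!\begin{array}{c} I_k\\ \bzero\\ \end{array}\!\!\right] - J^{\ell,k} \]
that equals the constant $(-1)^k$. First I would observe that if even one $r\times r$ minor of a polynomial matrix over $\Fq[x]$ is a unit (i.e.\ a nonzero element of $\Fq$), then the gcd of all $r \times r$ minors divides this unit and hence is $1$. In our situation $r = \min\{\ell, k\} = k$ since $k < \ell$, so it suffices to produce one $k \times k$ minor of $M$ that is a nonzero constant.

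Next, I would consider the $k \times k$ submatrix $M'$ formed by the last $k$ rows of $M$, namely rows $\ell-k+1, \ldots, \ell$. The contribution of $-J^{\ell,k}$ to $M'$ is exactly $-I_k$, because by definition the bottom $k$ rows of $J^{\ell,k}$ coincide with $I_k$. The contribution of the $xI$-term to the $i$\textsuperscript{th} row of $M'$ is $x\, e_{\ell-k+i}^{\,t}$ when $\ell-k+i \le k$ (equivalently $i \le 2k-\ell$) and zero otherwise. Since $\ell > k$, any such $x$ appears in column $\ell-k+i > i$, which is strictly above the diagonal of $M'$. Consequently $M'$ is upper triangular with $-1$ on the diagonal, and thus $\det M' = (-1)^k$.

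Finally, since one particular $k \times k$ minor of $M$ equals the unit $(-1)^k \in \Fq[x]$, the gcd of all $k \times k$ minors of $M$ is $1$, so $M$ is unimodular. I do not anticipate any real obstacle; the only point worth checking is that the triangularity description of $M'$ is valid in both regimes $\ell \ge 2k$ (where no $x$-entries appear and $M' = -I_k$ outright) and $k < \ell < 2k$ (where some $x$-entries genuinely appear strictly above the diagonal), and in both cases the conclusion $\det M' = (-1)^k$ holds.
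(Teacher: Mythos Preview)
Your proof is correct and follows exactly the same approach as the paper: both compute the $k\times k$ minor formed by the last $k$ rows of the matrix and observe that it equals $(-1)^k$, whence the gcd of all $k\times k$ minors is $1$. You simply give more detail in verifying that this submatrix is upper triangular with $-1$'s on the diagonal, which the paper leaves implicit.
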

\begin{proof}
  Since the $k\times k$ minor formed by the last $k$ rows of the above matrix polynomial equals $(-1)^k$ it follows that the GCD of all $k\times k$ minors is 1.
\end{proof}

\begin{definition}
Let $m, \ell$ be positive integers such that $m<\ell$. An $m$-companion matrix of order $\ell$
over $\mFq$ is a square matrix $C$ of the form
\[ C = [J^{\ell,\ell-m}\ A]\] for some $A\in M_{\ell,m}(\mFq)$. We denote the set of all $m$-companion matrices of order $\ell$ over $\Fq$ by $\calM(\ell, m; q)$. Note that
$|\calM(\ell, m; q)| = q^{\ell m}$.  
\end{definition}

Let $\calP(\ell,\mFq)$ denote the set of all monic polynomials of degree $\ell$ over $\mFq$.
Now consider the map $\Phi : \calM(\ell, m; q) \rightarrow \calP(\ell,\mFq)$ given by
\[\Phi(C) := \det(x I_\ell - C).\]
To determine the size of the fibers of $\Phi$, we require a theorem of Wimmer.
\begin{theorem}[Wimmer]
\label{th:wimmer}
Let $F$ be an arbitrary field and let $Y\in M_{\ell,k}(F)$. Suppose $f\in F[x]$ is a monic polynomial of degree $\ell$ and let $f_1(x)\mid \cdots \mid f_k(x)$ be the invariant factors of the polynomial matrix $xI_{\ell,k}-Y$. There exists a matrix $Z \in M_{\ell,\ell-k}(F)$ such that the block matrix $[Y\; Z]$ has characteristic polynomial $f(x)$ if and only if the product $\prod_{i=1}^kf_i(x)$ divides $f(x)$.   
\end{theorem}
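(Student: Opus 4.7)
\emph{The necessity direction} is straightforward. Suppose $[Y\;Z]$ has characteristic polynomial $f$, so that $\det(xI_\ell-[Y\;Z])=(-1)^\ell f(x)$. The first $k$ columns of $xI_\ell-[Y\;Z]$ coincide with those of $xI_{\ell,k}-Y$, whose $k$-th determinantal divisor (the monic gcd of its $k\times k$ minors) equals $d_k(x):=\prod_{i=1}^k f_i(x)$. A Laplace expansion of $\det(xI_\ell-[Y\;Z])$ along the last $\ell-k$ columns writes this determinant as a signed sum of products, in each of which one factor is a $k\times k$ minor of $xI_{\ell,k}-Y$. Every such minor is divisible by $d_k$, hence so is $f$.

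\emph{For sufficiency}, I would reduce $Y$ to a canonical form and then construct $Z$ explicitly. Partition $Y=\begin{pmatrix}A\\C\end{pmatrix}$ with $A\in M_k(F)$, $C\in M_{\ell-k,k}(F)$. Under the similarity $[Y\;Z]\mapsto S[Y\;Z]S^{-1}$ with $S=\diag(P,I_{\ell-k})$ and $P\in GL_k(F)$, the characteristic polynomial of $[Y\;Z]$ is preserved, as are the invariant factors of $xI_{\ell,k}-Y$ (since the transformed matrix equals $S(xI_{\ell,k}-Y)P^{-1}$, a constant unimodular equivalence), while $Y$ transforms to $\begin{pmatrix}PAP^{-1}\\CP^{-1}\end{pmatrix}$. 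On the pair $(A^t,C^t)$ this is precisely state-space conjugation, so a Kalman-type decomposition lets me assume
\[
A=\begin{pmatrix}A_1 & 0\\ A_{12} & A_{22}\end{pmatrix},\qquad C=\begin{pmatrix}C_1 & 0\end{pmatrix},
\]
with $(A_1^t,C_1^t)\in M_r(F)\times M_{r,\ell-k}(F)$ reachable. A direct Smith-form calculation then shows that the invariant factors of $xI_{\ell,k}-Y$ are $r$ copies of $1$ followed by the invariant factors of $A_{22}$; in particular the characteristic polynomial of $A_{22}$ equals $\prod_{i=1}^k f_i(x)$.

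I would now take $Z=\begin{pmatrix}Z_{11}\\ 0\\ Z_{31}\end{pmatrix}$ with $Z_{11}\in M_{r,\ell-k}(F)$ and $Z_{31}\in M_{\ell-k}(F)$. A simultaneous row--column permutation of $[Y\;Z]$ moving the unreachable block to the bottom right exhibits $[Y\;Z]$ as similar to a block triangular matrix with diagonal blocks $M:=\begin{pmatrix}A_1 & Z_{11}\\ C_1 & Z_{31}\end{pmatrix}$ and $A_{22}$, so $\det(xI_\ell-[Y\;Z])=\det(xI-M)\cdot\prod f_i(x)$. It therefore suffices to choose $Z_{11},Z_{31}$ so that $\det(xI-M)=f/\prod f_i$. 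Since characteristic polynomial is invariant under transposition, this is equivalent to finding $\tilde F=(Z_{11}^t\mid Z_{31}^t)$ with $\tilde A+\tilde B\tilde F=M^t$ having the prescribed characteristic polynomial, where $\tilde A=\begin{pmatrix}A_1^t & C_1^t\\ 0 & 0\end{pmatrix}$ and $\tilde B=\begin{pmatrix}0\\ I_{\ell-k}\end{pmatrix}$. A direct computation of the reachability matrix of $(\tilde A,\tilde B)$ shows it is reachable whenever $(A_1^t,C_1^t)$ is, and the classical pole placement theorem---an immediate corollary of Brunovsky's Theorem~\ref{th:Brunovsky}---then supplies a suitable $\tilde F$.

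\emph{The main obstacle} lies in the pole placement step. One establishes it by transporting $(\tilde A,\tilde B)$ to its Brunovsky canonical form, in which $\tilde A+\tilde B\tilde F$ becomes a block companion matrix whose characteristic polynomial depends linearly and invertibly on the free entries of $\tilde F$; matching coefficients with the target monic polynomial reduces to solving a triangular linear system. The preliminary Kalman decomposition is standard: one chooses a basis of $F^k$ compatible with the reachable subspace of $(A^t,C^t)$.
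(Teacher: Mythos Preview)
The paper does not prove Theorem~\ref{th:wimmer}; its ``proof'' consists solely of the citation ``See Wimmer~\cite{Wimmer1974} or Cravo~\cite[Thm.~15]{Cravo2009}.'' Thus there is no argument against which to compare your proposal in the usual sense.

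Your proof sketch is essentially correct and self-contained, and it is very much in the spirit of the paper, which relies throughout on control-theoretic tools (Brunovsk\'y canonical form, reachability). A couple of minor remarks. First, a sign slip: with the usual convention $\chi_M(x)=\det(xI-M)$ one has $\det(xI_\ell-[Y\;Z])=f(x)$, not $(-1)^\ell f(x)$; this does not affect the divisibility argument. Second, your sufficiency argument is sound: the Kalman decomposition of $(A^t,C^t)$ puts $Y$ in the claimed block form; unimodularity of $\bigl[\begin{smallmatrix} xI-A_1\\ -C_1\end{smallmatrix}\bigr]$ (equivalent to reachability of $(A_1^t,C_1^t)$) lets you reduce the Smith form computation to that of $xI-A_{22}$, yielding $\chi_{A_{22}}=\prod_i f_i$; your choice of $Z$ makes $[Y\;Z]$ similar to a block-triangular matrix with diagonal blocks $M$ and $A_{22}$; and the reachability of $(\tilde A,\tilde B)$ follows from that of $(A_1^t,C_1^t)$ exactly as you indicate, after which pole placement (which is indeed a consequence of the Brunovsk\'y form) finishes the construction. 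In short, you have supplied a proof where the paper supplies only a reference.
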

\begin{proof}
See Wimmer \cite{Wimmer1974} or Cravo \cite[Thm. 15]{Cravo2009}.  
\end{proof}

\begin{theorem}\label{th:fibersize}
Suppose that $f\in \calP(\ell,\mFq)$ is irreducible. Then 
\[|\Phi^{-1}(f)| = \prod_{t=1}^{m-1} (q^\ell - q^{\ell -t}).\]
\end{theorem}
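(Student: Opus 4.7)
The plan is to combine Wimmer's theorem (Theorem~\ref{th:wimmer}) with Lemma~\ref{lem:extension}. First I would decompose an arbitrary $m$-companion matrix as $C = [J^{\ell,\ell-m}\; A' \; a]$, where $A' \in M_{\ell, m-1}(\mFq)$ consists of the first $m-1$ columns of the varying block and $a \in \mFq^\ell$ is the final column. Set $Y := J^{\ell, \ell-m}$ and $\mathbf{U}(A') := xI_{\ell,\ell-1} - [Y\; A']$. The key observation is that if $C$ has irreducible characteristic polynomial $f$ of degree $\ell$, then $\mathbf{U}(A')$ must be unimodular. Indeed, by Theorem~\ref{th:wimmer} the product of the invariant factors of $\mathbf{U}(A')$ divides $f$; since this product equals the GCD of the $(\ell-1) \times (\ell-1)$ minors of $\mathbf{U}(A')$, it has degree at most $\ell-1$, and the irreducibility of $f$ (of degree $\ell$) then forces the product to be $1$.

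Next I would establish the converse counting fact: for each $A'$ such that $\mathbf{U}(A')$ is unimodular, there is exactly one $a \in \mFq^\ell$ with $\det(xI_\ell - [Y\; A' \; a]) = f(x)$. Existence follows from Theorem~\ref{th:wimmer} applied with $k = \ell - 1$ (so that $Z$ is a single column), since all invariant factors of $\mathbf{U}(A')$ being $1$ makes every monic polynomial of degree $\ell$ achievable as the characteristic polynomial of $[Y\;A'\;a]$. Uniqueness follows by a pigeonhole argument: the map $a \mapsto \det(xI_\ell - [Y\; A'\; a])$ from $\mFq^\ell$ to $\calP(\ell, \mFq)$ is surjective, and both sets have cardinality $q^\ell$, so the map is a bijection.

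Finally I would invoke Lemma~\ref{lem:extension} with $n = \ell$, $k = \ell - m$, and $t = m - 1$; the hypothesis $k + t < n$ holds since $\ell - 1 < \ell$, and Lemma~\ref{asc} furnishes the required unimodularity of $xI_{\ell, \ell-m} - J^{\ell, \ell-m}$. This yields that the number of $A' \in M_{\ell, m-1}(\mFq)$ for which $\mathbf{U}(A')$ is unimodular equals $\prod_{i=1}^{m-1}(q^\ell - q^{\ell - m + i})$, which after reindexing $t = m - i$ becomes the target $\prod_{t=1}^{m-1}(q^\ell - q^{\ell - t})$. Combining this with the one-to-one correspondence between such $A'$ and elements of $\Phi^{-1}(f)$ from Steps~1 and~2 gives the claim. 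The main obstacle is the first step: one must use irreducibility of $f$ essentially, so that the only monic divisors of $f$ are $1$ and $f$, and the degree constraint on $\prod f_i$ then rules out the latter. This is precisely what allows the counting problem for fibers of $\Phi$ to be converted into a pure unimodularity-counting problem already solved by Lemma~\ref{lem:extension}.
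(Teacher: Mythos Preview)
Your proposal is correct and follows essentially the same approach as the paper: split $C=[J^{\ell,\ell-m}\;A'\;a]$, use Wimmer's theorem to show that $xI_{\ell,\ell-1}-[J^{\ell,\ell-m}\;A']$ is unimodular whenever $\Phi(C)=f$ and that there is then a unique completing column $a$ (via the same surjectivity/pigeonhole argument), and count the admissible $A'$ by Lemma~\ref{lem:extension} starting from Lemma~\ref{asc}. The only cosmetic difference is that the paper states unimodularity for the whole chain $C_0,\ldots,C_{m-1}$, whereas you correctly observe that only the endpoints $C_0$ and $C_{m-1}$ are needed; your degree argument for why $\prod f_i=1$ is exactly the content the paper leaves implicit.
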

\begin{proof}
Let $C = [J^{\ell,\ell-m}\ A]\in \calM(\ell,m;q)$ with
$A = [{\bf a}_1\ {\bf a}_2\,\cdots\,{\bf a}_{m-1}\,{\bf a}_m]$, where the ${\bf a}_i$'s are the columns of $A$.
Let $C_0 = J^{\ell,\ell-m}$ and let $C_i = [J^{\ell,\ell-m}\ {\bf a}_1\ {\bf a}_2\,\cdots\,{\bf a}_{i}] $ denote the submatrix of $C$ formed by the first $\ell-m+i$ columns for $1\le i < m$.
Suppose that $\Phi(C) = f$. Since $f$ is irreducible, it follows by Lemma \ref{asc} and Wimmer's theorem that the linear matrix polynomials  
\begin{equation}\label{lmp_ci}
x \left[\!\!\begin{array}{c} I_{\ell -m+i}\\ \bzero\\ \end{array}\!\!\right] - C_{i}
\end{equation}
 are unimodular for $0 \le i\le m-1$. Conversely, if ${\bf a}_1,\ldots, {\bf a}_{m-1}$ are chosen such that the matrix polynomials in \eqref{lmp_ci} are unimodular, then there is a unique choice of ${\bf a}_m$ for which $\Phi(C)=f$. This follows since there are $q^\ell$ total choices for ${\bf a}_m$ and for each monic polynomial $g$ of degree $\ell$, Wimmer's theorem ensures that there exists some choice of ${\bf a}_m$ such that the characteristic polynomial is $g$. By Lemma~\ref{lem:extension} it follows that 
the number of choices for the first $m-1$ columns of $A$ is equal to $\prod_{i=1}^{m-1}(q^\ell - q^{\ell-m+i})$ which proves the result.
\end{proof}

\begin{remark}
In the case where $m$ divides $\ell$, say $d = \ell /m $, the set $\calM(\ell, m; q)$ consists precisely of all $(m,d)$-block companion matrices over $\mFq$. This observation yields the following corollary stated earlier as Theorem \ref{th:numbcm}. 
\begin{corollary}\label{cor:irredfiber}
    For any irreducible polynomial $f\in \Fq[x]$ of degree $md$, the number of $(m,d)$-block companion matrices over $\Fq$ having $f$ as their characteristic polynomial equals
  $$
q^{m(m-1)(d-1)}\prod_{i=1}^{m-1}(q^m-q^{i}).
  $$
  \begin{proof}
    It follows by the above remark that the number of $(m,d)$-block companion matrices over $\Fq$ having $f$ as their characteristic polynomial equals
    $$
\prod_{i=1}^{m-1}(q^{md}-q^{m(d-1)+i})=\prod_{i=1}^{m-1}q^{m(d-1)}(q^m-q^{i}),
$$
which is clearly equal to the given product.
  \end{proof}

\end{corollary}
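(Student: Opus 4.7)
The plan is to recognize that Corollary \ref{cor:irredfiber} is a direct specialization of Theorem \ref{th:fibersize}, and so the work reduces to an identification of sets and an algebraic tidying. The first step is to verify that when $\ell = md$, the set $\calM(md, m; q)$ of $m$-companion matrices of order $md$ coincides exactly with the collection of $(m,d)$-block companion matrices over $\Fq$. For this one checks the matrix $J^{md,\, m(d-1)}$: by definition its top $m$ rows are zero and its bottom $m(d-1)$ rows form $I_{m(d-1)}$, so the $j$-th column has a single $1$ in row $m+j$. Comparing column-by-column with the displayed form \eqref{bcm}, this is precisely the first $m(d-1)$ columns of an $(m,d)$-block companion matrix. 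The last $m$ columns then form an arbitrary matrix $A \in M_{md,m}(\Fq)$, whose consecutive block rows are the free blocks $C_0,C_1,\ldots,C_{d-1}$. The indicated parametrizations therefore agree.

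Having made that identification, I would invoke Theorem \ref{th:fibersize} with $\ell = md$ to conclude
\[
|\Phi^{-1}(f)| \;=\; \prod_{t=1}^{m-1}\bigl(q^{md} - q^{md-t}\bigr).
\]
Reindexing by $i = m-t$ rewrites this as $\prod_{i=1}^{m-1}(q^{md} - q^{m(d-1)+i})$, and pulling the common factor $q^{m(d-1)}$ out of each term yields
\[
\prod_{i=1}^{m-1} q^{m(d-1)}\bigl(q^m - q^i\bigr) \;=\; q^{m(m-1)(d-1)} \prod_{i=1}^{m-1}\bigl(q^m - q^i\bigr),
\]
which is the claimed formula. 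There is no genuine obstacle here: all of the substantive content (the control-theoretic normal form from Brunovsky's theorem, the coprimality count of Lemma \ref{rprime}, and the lifting statement of Wimmer's theorem) is already encapsulated in Theorem \ref{th:fibersize}, so the corollary is essentially a dictionary translation followed by a routine factor extraction.
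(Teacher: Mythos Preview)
Your proof is correct and follows essentially the same route as the paper: the paper's remark preceding the corollary is exactly your identification $\calM(md,m;q)=\{(m,d)\text{-block companion matrices}\}$, and the paper then applies Theorem~\ref{th:fibersize} and performs the same factor extraction (your reindexing $i=m-t$ simply bridges the form $\prod_t(q^{md}-q^{md-t})$ to the paper's $\prod_i(q^{md}-q^{m(d-1)+i})$).
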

In light of the above corollary and Remark \ref{ssc=bcmi} we can view Theorem \ref{th:fibersize} as a more general result than the Splitting Subspace Theorem. While our proof relies on results in control theory, it is shorter than the proofs of the theorem appearing in~\cite{sscffa} and~\cite{SKHP2012}.  
\end{remark}

\section{Probability of  Unimodular Polynomial Matrices}\label{sec:unimodular}
We apply Lemma \ref{lem:extension} to positively resolve a conjecture \cite[Conj. 4.1]{zerokernel} concerning the number of unimodular polynomial matrices. For positive integers $d,k,n$ with $k< n$, define  
\[ M_{n,k}(\mFq[x];d) := \left\{ {\bf A} = x^d I_{n,k}
+ \sum_{i=0}^{d-1} x^i A_i\ :
\ A_i \in M_{n,k}(\mFq)\ \mbox{for}\ 0\le i\le d-1\right\}.\]

\begin{theorem}\label{th:density}
The probability that a uniformly random element of $M_{n,k}(\mFq[x];d)$ is unimodular is given by
$\prod_{i=1}^k (1-q^{i-n})$.  
\end{theorem}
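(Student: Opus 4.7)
The plan is to reduce the degree-$d$ problem to the linear case by constructing an explicit linearization and then invoking Lemma~\ref{lem:extension}. Writing $\mathbf{A}(x) = x^d I_{n,k} + \sum_{i=0}^{d-1} x^i A_i$, associate to it the $(nd) \times (k + n(d-1))$ linear matrix pencil
\[
L_{\mathbf{A}}(x) := \begin{pmatrix}
-I_n & 0 & \cdots & 0 & xI_{n,k} + A_{d-1} \\
xI_n & -I_n & & & A_{d-2} \\
 & \ddots & \ddots & & \vdots \\
0 & & xI_n & -I_n & A_1 \\
0 & \cdots & 0 & xI_n & A_0
\end{pmatrix},
\]
whose last block column carries the free data $(A_0,\ldots,A_{d-1})$ and whose first $d-1$ block columns are fixed.

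The first step is to show that $L_{\mathbf{A}}(x)$ is unimodular if and only if $\mathbf{A}(x)$ is. I would apply row operations over $\mFq[x]$ from top to bottom: adding $x$ times block row $i$ to block row $i+1$ kills the $xI_n$ on the subdiagonal and accumulates the $A_i$'s into the $(d,d)$ block position, leaving $\mathbf{A}(x)$ there. Subsequent column operations using the $-I_n$ pivots in the first $d-1$ block columns clear the polynomial entries from the last block column of block rows $1,\ldots,d-1$. The result is, up to row and column permutations and signs, $\diag(I_{n(d-1)},\mathbf{A}(x))$, so the GCD of the $(k+n(d-1))\times(k+n(d-1))$ minors of $L_{\mathbf{A}}(x)$ equals the GCD of the $k\times k$ minors of $\mathbf{A}(x)$, as required.

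Set $N = nd$ and $r = k + n(d-1)$. A row permutation $P$ brings the leading coefficient of $L_{\mathbf{A}}(x)$ to $I_{N,r}$, so that $P L_{\mathbf{A}}(x) = x I_{N,r} - [Y \; A]$, where $Y \in M_{N, n(d-1)}(\mFq)$ is fixed (coming from the pencil skeleton) and $A \in M_{N, k}(\mFq)$ depends on the tuple $(A_0,\ldots,A_{d-1})$ via a linear bijection with $M_{n,k}(\mFq)^d$. Restricting to the first $n(d-1)$ columns of $L_{\mathbf{A}}(x)$ yields a block pencil whose top $n(d-1)\times n(d-1)$ submatrix is triangular with determinant $\pm 1$, so $x I_{N, n(d-1)} - Y$ is unimodular. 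Lemma~\ref{lem:extension} then counts the admissible $A$, and hence the admissible tuples $(A_0,\ldots,A_{d-1})$, as $\prod_{i=1}^k (q^{nd} - q^{n(d-1)+i}) = q^{nk(d-1)} \prod_{i=1}^k (q^n - q^i)$. Dividing by $|M_{n,k}(\mFq[x];d)| = q^{nkd}$ collapses to $\prod_{i=1}^k (1 - q^{i-n})$.

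The main obstacle is the first step: verifying carefully, through concrete row and column operations, the equivalence between the unimodularity of the linearization and of $\mathbf{A}(x)$. These operations form a standard linearization trick, but the block bookkeeping is delicate. Equally important is to check that the induced map from the $d$-tuple $(A_0,\ldots,A_{d-1})$ to the free columns of the transformed pencil is genuinely a bijection, so that the count from Lemma~\ref{lem:extension} transfers directly to the desired count of unimodular matrix polynomials. Once these are established, the remaining arithmetic is immediate.
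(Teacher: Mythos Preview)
Your proposal is correct and follows essentially the same approach as the paper. Both arguments linearize $\mathbf{A}(x)$ into an $nd\times(n(d-1)+k)$ linear pencil, use elementary block row and column operations to show that the pencil is unimodular if and only if $\mathbf{A}(x)$ is, observe that the first $n(d-1)$ columns of the pencil form a unimodular linear matrix polynomial, and then invoke Lemma~\ref{lem:extension} to count the admissible last $k$ columns; the paper's pencil is $xI_{nd,n(d-1)+k}-B$ with $B$ in block-companion form, which differs from your $L_{\mathbf{A}}(x)$ only by a block row permutation and sign conventions.
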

\begin{proof}

To each element ${\bf A}$ in $M_{n,k}(\mFq[x];d)$, we associate the corresponding $d$-tuple
of its coefficients $(A_0, A_1,\ldots,A_{d-1}) \in [M_{n,k}(\mFq)]^d$. Now consider the matrix
\begin{align}\label{bform}
B = \left[\begin{array}{ccccc}
\bzero & \bzero & \dots & \bzero & -A_0 \\
I_n & \bzero & \dots & \bzero & -A_1 \\
\vdots & \vdots & \rotatebox{20}{$\ddots$} & \vdots & \vdots \\
\bzero & \bzero & \ldots & I_{n} & -A_{d-1}\\
\end{array}\right]
\end{align}
of dimension $nd \times (nd-n+k)$. Let
\[{\bf B} = x \left[\!\!\begin{array}{c} I_{(d-1)n+k}\\ \bzero\\ \end{array}\!\!\right] - B .\]
By adding $x$ times the $i$\textsuperscript{th} block row to the $(i-1)$\textsuperscript{th} block row successively for $i = d, d-1, \ldots, 2$ in ${\bf B}$ and using suitable column block operations, we obtain
\begin{align*}
{\bf B}' = \left[\begin{array}{ccccc}
\bzero & \bzero & \dots & \bzero & {\bf A} \\
I_n & \bzero & \dots & \bzero & \bzero \\
\vdots & \vdots &  \rotatebox{20}{$\ddots$} & \vdots & \vdots \\
\bzero & \bzero & \ldots & I_{n} & \bzero\\
\end{array}\right],
\end{align*}
where ${\bf A}=x^d I_{n,k}+
\sum_{i=0}^{d-1} x^i A_i \in M_{n,k}(\mFq[x];d)$.
Observe that ${\bf B}$ is equivalent to ${\bf B}'$. So the invariant factors of ${\bf B}$ and ${\bf B}'$ are the same.
Therefore ${\bf B}$ is unimodular if and only if ${\bf A}$ is unimodular. 
By Lemma \ref{lem:extension}, the number of ways to choose the last $k$ columns of the matrix $B$ in \eqref{bform} in such a way that ${\bf B}$ is unimodular is 
$$\prod_{i=1}^{k} (q^{nd}-q^{n(d-1)+i}).$$
On the other hand, the cardinality of $M_{n,k}(\mFq[x];d)$ is clearly $q^{nkd}$ and therefore the probability that a uniformly random element of $M_{n,k}(\mFq[x];d)$ is unimodular is precisely $\prod_{i=1}^k (1-q^{i-n})$.
\end{proof}
Note that the probability computed in the theorem is independent of $d$.
\begin{remark}
The above theorem is a generalization of Corollary~\ref{cor:numsimp} which is evidently the special case $d=1$.   
\end{remark}
Theorem \ref{th:density} parallels a result of Guo and Yang \cite[Thm. 1]{MR3008525} who prove that the natural density of unimodular $n\times k$ matrices over $\Fq[x]$ is precisely $\prod_{i=1}^k (1-q^{i-n})$.    
\begin{remark}
To study the invariant factors of an element ${\bf A} \in M_{n,k}(\mFq[x];d)$, it suffices to study those of the corresponding linear matrix polynomial ${\bf B}$ associated to the matrix $B$ as defined in Equation \eqref{bform}. The matrix polynomial ${\bf B}$ is called the linearization of ${\bf A}$.
\end{remark}

\section*{Acknowledgements}
The third author would like thank Mr. Abhishek Kesarwani and Dr. Santanu Sarkar for useful discussions.


\begin{thebibliography}{10}

\bibitem{Brunovsk1970ACO}
Pavol Brunovsk{\'y}.
\newblock A classification of linear controllable systems.
\newblock {\em Kybernetika}, 6:173--188, 1970.

\bibitem{sscffa}
Eric Chen and Dennis Tseng.
\newblock The splitting subspace conjecture.
\newblock {\em Finite Fields Appl.}, 24:15--28, 2013.

\bibitem{MR3385053}
Stephen~D. Cohen, Sartaj~Ul Hasan, Daniel Panario, and Qiang Wang.
\newblock An asymptotic formula for the number of irreducible transformation
  shift registers.
\newblock {\em Linear Algebra Appl.}, 484:46--62, 2015.

\bibitem{MR1620873}
Sylvie Corteel, Carla~D. Savage, Herbert~S. Wilf, and Doron Zeilberger.
\newblock A pentagonal number sieve.
\newblock {\em J. Combin. Theory Ser. A}, 82(2):186--192, 1998.

\bibitem{Cravo2009}
Gl{\'o}ria Cravo.
\newblock Matrix completion problems.
\newblock {\em Linear Algebra Appl.}, 430(8-9):2511--2540, 2009.

\bibitem{MR2238027}
Zhicheng Gao and Daniel Panario.
\newblock Degree distribution of the greatest common divisor of polynomials
  over {$\Bbb F_q$}.
\newblock {\em Random Structures Algorithms}, 29(1):26--37, 2006.

\bibitem{Mario}
Mario Garc{\'{\i}}a-Armas, Sudhir~R. Ghorpade, and Samrith Ram.
\newblock Relatively prime polynomials and nonsingular {H}ankel matrices over
  finite fields.
\newblock {\em J. Combin. Theory Ser. A}, 118(3):819--828, 2011.

\bibitem{GSM}
Sudhir~R. Ghorpade, Sartaj~Ul Hasan, and Meena Kumari.
\newblock Primitive polynomials, {S}inger cycles and word-oriented linear
  feedback shift registers.
\newblock {\em Des. Codes Cryptogr.}, 58(2):123--134, 2011.

\bibitem{m=2}
Sudhir~R. Ghorpade and Samrith Ram.
\newblock Block companion {S}inger cycles, primitive recursive vector
  sequences, and coprime polynomial pairs over finite fields.
\newblock {\em Finite Fields Appl.}, 17(5):461--472, 2011.

\bibitem{split}
Sudhir~R. Ghorpade and Samrith Ram.
\newblock Enumeration of splitting subspaces over finite fields.
\newblock In {\em Arithmetic, geometry, cryptography and coding theory}, volume
  574 of {\em Contemp. Math.}, pages 49--58. Amer. Math. Soc., Providence, RI,
  2012.

\bibitem{Partially}
Israel Gohberg, M.~A. Kaashoek, and Frederik~van Schagen.
\newblock {\em Partially specified matrices and operators : classification,
  completion, applications}, volume~79 of {\em Operator theory, advances and
  applications}.
\newblock Birkhauser Verlag, Basel, Switzerland, Boston, 1995.

\bibitem{GoreskyKlapper2006}
M.~{Goresky} and A.~{Klapper}.
\newblock Pseudonoise sequences based on algebraic feedback shift registers.
\newblock {\em IEEE Transactions on Information Theory}, 52(4):1649--1662,
  April 2006.

\bibitem{MR3008525}
Xiangqian Guo and Guangyu Yang.
\newblock The probability of rectangular unimodular matrices over
  {$\Bbb{F}_q[x]$}.
\newblock {\em Linear Algebra Appl.}, 438(6):2675--2682, 2013.

\bibitem{MR3622678}
Yupeng Jiang and Jiangshuai Yang.
\newblock On the number of irreducible linear transformation shift registers.
\newblock {\em Des. Codes Cryptogr.}, 83(2):445--454, 2017.

\bibitem{K}
Donald~E. Knuth.
\newblock {\em The art of computer programming. {V}ol. 2: {S}eminumerical
  algorithms}.
\newblock Addison-Wesley Publishing Co., Reading, Mass.-London-Don Mills, Ont,
  1969.

\bibitem{MR1019984}
M.~Koci{\k{e}}cki and K.~M. Przy{\l}uski.
\newblock On the number of controllable linear systems over a finite field.
\newblock {\em Linear Algebra Appl.}, 122/123/124:115--122, 1989.

\bibitem{SKHP2012}
Srinivasan Krishnaswamy and Harish~K. Pillai.
\newblock On multisequences and their extensions.
\newblock {\em CoRR}, abs/1208.4501, 2012.

\bibitem{JL2018}
Julia Lieb.
\newblock Uniform probability and natural density of mutually left coprime
  polynomial matrices over finite fields.
\newblock {\em Linear Algebra Appl.}, 539:134--159, 2018.

\bibitem{HJL2016}
Julia Lieb, Jens Jordan, and Uwe Helmke.
\newblock Probability estimates for reachability of linear systems defined over
  finite fields.
\newblock {\em Adv. Math. Commun.}, 10(1):63--78, 2016.

\bibitem{MR2763589}
G\'{e}rard Maze, Joachim Rosenthal, and Urs Wagner.
\newblock Natural density of rectangular unimodular integer matrices.
\newblock {\em Linear Algebra Appl.}, 434(5):1319--1324, 2011.

\bibitem{N2}
Harald Niederreiter.
\newblock The multiple-recursive matrix method for pseudorandom number
  generation.
\newblock {\em Finite Fields Appl.}, 1(1):3--30, 1995.

\bibitem{MR0427303}
Daniel Quillen.
\newblock Projective modules over polynomial rings.
\newblock {\em Invent. Math.}, 36:167--171, 1976.

\bibitem{samtsr}
Samrith Ram.
\newblock Enumeration of linear transformation shift registers.
\newblock {\em Designs, Codes and Cryptography}, 75(2):301--314, 2015.

\bibitem{zerokernel}
Samrith Ram.
\newblock Counting zero kernel pairs over a finite field.
\newblock {\em Linear Algebra Appl.}, 495:1--10, 2016.

\bibitem{MR0469905}
A.~A. Suslin.
\newblock Projective modules over polynomial rings are free.
\newblock {\em Dokl. Akad. Nauk SSSR}, 229(5):1063--1066, 1976.

\bibitem{MR3706911}
Yinghui Wang and Richard~P. Stanley.
\newblock The {S}mith normal form distribution of a random integer matrix.
\newblock {\em SIAM J. Discrete Math.}, 31(3):2247--2268, 2017.

\bibitem{Wimmer1974}
Harald~K. Wimmer.
\newblock Existenzs\"atze in der {T}heorie der {M}atrizen und lineare
  {K}ontrolltheorie.
\newblock {\em Monatsh. Math.}, 78:256--263, 1974.

\bibitem{ZabInv}
Ion Zaballa.
\newblock Matrices with prescribed rows and invariant factors.
\newblock {\em Linear Algebra Appl.}, 87:113--146, 1987.

\end{thebibliography}
\end{document}